\documentclass[11pt,reqno]{amsart}

\usepackage[text={160mm,240mm},centering]{geometry}            
\usepackage{amssymb,amsmath,setspace,geometry,indentfirst,changepage,inputenc,amsthm}
\usepackage{mathrsfs,amsfonts,savesym,graphicx,bm,color}

\usepackage{graphicx}
\usepackage{amssymb}
\usepackage{epstopdf}
\usepackage{dsfont}
\usepackage{placeins}
\usepackage{pifont}
\usepackage{romannum}

\usepackage{lscape}

\usepackage{tikz}
\usetikzlibrary{positioning}
\usepackage{xcolor}

\usepackage{float}

\usepackage[all,2cell]{xy}    
\UseAllTwocells               

\usepackage{amsmath,amsthm}

\usepackage[mathscr]{eucal}
\usepackage[all]{xy}
\usepackage{mathrsfs}
\usepackage{hyperref}
\usepackage{color}

\newtheorem{theorem}{Theorem}[section]
\newtheorem{lemma}[theorem]{Lemma}

\newtheorem*{main thm}{Main Theorem}

\newtheorem{conj}[theorem]{Conjecture}
\newtheorem{cor}[theorem]{Corollary}
\newtheorem{prop}[theorem]{Proposition}

\newtheorem{question}[theorem]{Question}
\newtheorem{def-lemma}[theorem]{Definition-Lemma}
\newtheorem{def-thm}[theorem]{Definition-Theorem}

\newtheorem{thm}[theorem]{Theorem}

\theoremstyle{definition}
\newtheorem{eg}[theorem]{Example}
\newtheorem{definition}[theorem]{Definition}

\newtheorem{remark}[theorem]{Remark}

\newtheorem*{ack}{Acknowledgements}

\makeatletter
\AtBeginDocument{%
	\fontdimen16\textfont2 = 3.5pt
	\fontdimen17\textfont2 = 3.5pt
}
\makeatother

\allowdisplaybreaks

\title[]{On the Multiplicative Thom-Sebastiani Property for Bernstein-Sato Polynomials of Ideals}

\author{Yongxin Xu}
\address{
	Department of Mathematical Sciences,
	Tsinghua University,
	Beijing, 100084, P. R. China.}
\email{xuyongxi22@mails.tsinghua.edu.cn}

\author{Huaiqing Zuo}
\address{Department of Mathematical Sciences,
	Tsinghua University,
	Beijing, 100084, P. R. China.}
\email{hqzuo@mail.tsinghua.edu.cn}

\begin{document}

	\setcounter{page}{1}
	\pagenumbering{arabic}
	
	\maketitle
	
	\parskip=.3em
	
	\setcounter{tocdepth}{1}

	\begin{abstract}
		We prove that the Bernstein-Sato polynomials of ideals satisfy the multiplicative Thom-Sebastiani property. This affirmatively settles a question of Shi and Zuo that extends a question independently formulated by Budur and Popa.
		
		Keywords. Bernstein-Sato polynomial, Thom-Sebastiani property, monodromy conjecture.
		
		MSC(2020). 14F10, 13N10, 16S32.
	\end{abstract}
	
	\section{Introduction}\label{s1}
	
	\par Suppose $X=\mathbb{C}^{m}$ and let $\mathcal{O}_{X}(X)=\mathbb{C}[\bm{x}]=\mathbb{C}[x_{1},\cdots ,x_{m}]$ be the ring of regular functions on $X$. Throughout this paper, for a quasi-coherent sheaf $\mathcal{F}$ on an affine scheme, we abuse notation by identifying $\mathcal{F}$ with its module of global sections. Let $\mathcal{D}_{X}$ be the sheaf of differential operators on $X$. For $f\in \mathbb{C}[\bm{x}]\setminus \mathbb{C}$, the Bernstein-Sato polynomial $b_{f}(s)$ is defined as the monic polynomial of the smallest degree such that
	\begin{align*}
		b_{f}(s)f^{s}\in \mathcal{D}_{X}[s]\cdot f^{s+1},
	\end{align*} 
	where $s$ is a formal indeterminate. Its existence was established by Bernstein \cite{c1} in response to Gelfand's question on the meromorphic continuation of Archimedean zeta functions. A parallel theory was independently developed by Sato and his school in the context of prehomogeneous vector spaces, see \cite{c7}.
	
	\par The Bernstein-Sato polynomial is known to encode deep information about singularities. The numbers $\exp(2\pi \sqrt{-1}\alpha)$, as $\alpha$ ranges over the roots of $b_{f}(s)$, give the eigenvalues of the monodromy action on the cohomology of the Milnor fiber, see \cite{c9}, \cite{c10}, \cite{c11}. In a different direction, Kollár proved that the log canonical threshold of $f$ equals the negative of the largest root of $b_{f}(s)$, see \cite{c13}. The Bernstein-Sato polynomial thus forms a bridge between analytic continuation, the theory of D-modules, and birational invariants of singularities.
	
	\par For an ideal $I$ of $\mathbb{C}[\bm{x}]$, Budur, Mustaţă, and Saito defined the Bernstein-Sato polynomial $b_{I}(s)$ using the theory of the V-filtration, see \cite{c2}. Suppose $I$ is generated by $f_{1},\cdots ,f_{r}\in \mathbb{C}[\bm{x}]$. The Bernstein-Sato polynomial $b_{I}(s)$ is defined as the monic polynomial of the smallest degree such that
	\begin{align*}
		b_{I}(|\bm{s}|)f_{1}^{s_{1}}\cdots f_{r}^{s_{r}}\in \sum_{\bm{\alpha}\in \mathbb{Z}^{r},|\bm{\alpha}|=1}\mathcal{D}_{X}[\bm{s}]\cdot \bigg( \prod_{\alpha_{i}<0}\binom{s_{i}}{-\alpha_{i}}\bigg)f_{1}^{s_{1}+\alpha_{1}}\cdots f_{r}^{s_{r}+\alpha_{r}},
	\end{align*} 
	where $|\bm{s}|=s_{1}+\cdots +s_{r}$ and $|\bm{\alpha}|=\alpha_{1}+\cdots +\alpha_{r}$. Budur, Mustaţă, and Saito showed in \cite{c2} that $b_{I}(s)$ is independent of the choice of generators of $I$. This definition generalizes the classical one, since we have 
	\begin{align*}
		b_{f}(s)=b_{(f)}(s)
	\end{align*}
	for a principal ideal $(f)$. 
	
	\par An important conjecture in singularity theory is the monodromy conjecture. For simplicity, we only state its strong version in the Igusa zeta function setting. Let $p$ be a fixed prime, and let $I\subset \mathbb{Z}[x_{1},\cdots ,x_{m}]$ be an ideal. There exists a unique Haar measure $\mu_{p}$ on $\mathbb{Z}_{p}$ normalized so that $\mu_{p}(\mathbb{Z}_{p})=1$. We thus have $\mu_{p}(p^{k}\mathbb{Z}_{p})=\frac{1}{p^{k}}$ for $k\in \mathbb{Z}_{\ge 0}$. We denote the product measure on $(\mathbb{Z}_{p})^{m}$ by the same symbol $\mu_{p}$. Let $ord_{p}$ be the discrete valuation on $\mathbb{Z}_{p}$ with $ord_{p}(0)=+\infty$. We define
	\begin{align*}
		ord_{p}I(a)=\min\{ ord_{p}f(a)|f\in I\}
	\end{align*} 
	for $a\in (\mathbb{Z}_{p})^{m}$. The Igusa zeta function of $I$ is 
	\begin{align*}
	 	Z_{I,p}(s)=\int_{(\mathbb{Z}_{p})^{m}}\frac{d\mu_{p}}{|p^{ ord_{p}I(a)}|^{s}}
	\end{align*}
	for all $s\in \mathbb{C}$ with $Re(s)>0$. When $I$ is principal, it was shown by Igusa in \cite{c12} that $Z_{I,p}(s)$ is a rational function of $p^{-s}$. Hence it admits a meromorphic extension to $\mathbb{C}$. A similar argument works for general ideals, see \cite{c15} or \cite{c16}. The strong monodromy conjecture for a polynomial $f\in \mathbb{Z}[\bm{x}]$ predicts that if $s_{0}$ is a pole of $Z_{f,p}(s)$ of multiplicity $k$ for every sufficiently large prime $p$, then $s_{0}$ is a root of $b_{f}(s)$ of multiplicity at least $k$, see \cite{c17}. A natural generalization of this conjecture to ideals is the following statement:
	
	\begin{conj}\label{mc}
	    Let $I\subset \mathbb{Z}[x_{1},\cdots ,x_{m}]$ be an ideal and let $p$ be a sufficiently large prime. Suppose $s_{0}$ is a pole of $Z_{I,p}(s)$ with multiplicity $k$. Then $s_{0}$ is a root of $b_{I}(s)$ with multiplicity at least $k$.
	\end{conj}
	 
	\par It is straightforward to see that the Igusa zeta function satisfies the multiplicative Thom-Sebastiani property, that is, for ideals $I\subset Z[x_{1},\cdots ,x_{m}]$ and $J\subset Z[y_{1},\cdots ,y_{n}]$, we have 
	\begin{align*}
		Z_{IJ,p}(s)=Z_{I,p}(s)Z_{J,p}(s).
	\end{align*}
	The question of whether the Bernstein-Sato polynomial of a single polynomial satisfies the multiplicative Thom-Sebastiani property was raised by Budur in \cite{c3}, and by Popa in \cite{c4}. Shi and Zuo in \cite{c5} answered the question affirmatively. Lee independently proved the same affirmative result in \cite{c14}, using a different approach.
	
	\begin{thm}\label{c1.1}\textnormal{(\cite{c5},\cite{c14})}
		For $f\in \mathbb{C}[\bm{x}]=\mathbb{C}[x_{1},\cdots ,x_{m}]$ and $g\in \mathbb{C}[\bm{y}]=\mathbb{C}[y_{1},\cdots ,y_{n}]$, we have
		\begin{align*}
			b_{fg}(s)=b_{f}(s)b_{g}(s).
		\end{align*}
	\end{thm}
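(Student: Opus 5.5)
The identity has two halves. The divisibility $b_{fg}(s)\mid b_f(s)b_g(s)$ is elementary. The reverse divisibility $b_f(s)b_g(s)\mid b_{fg}(s)$ carries the content, and I expect it to be the main obstacle.

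\textbf{Easy direction.} Choose functional equations
\[
b_f(s)f^s=P(s)f^{s+1},\qquad b_g(s)g^s=Q(s)g^{s+1},
\]
with $P\in\mathcal{D}_X[s]$ in the $\bm{x}$-variables and $Q\in\mathcal{D}_Y[s]$ in the $\bm{y}$-variables. Because the variables are separate, $P$ and $Q$ commute and each acts only on its own factor of $f^sg^s=(fg)^s$. Hence
\[
b_f(s)b_g(s)(fg)^s=P(s)Q(s)(fg)^{s+1}.
\]
This shows that $b_{fg}$ divides $b_fb_g$.

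\textbf{Hard direction: reduction to a module statement.} Set $M_f=\mathcal{D}_X[s]f^s/\mathcal{D}_X[s]f^{s+1}$, with $s$ acting as a $\mathbb{C}[s]$-endomorphism, and define $M_g$ similarly. Then $b_f$ is the minimal polynomial of $s$ on $M_f$. I would use the exterior tensor product $N=\mathcal{D}_X[s]f^s\boxtimes\mathcal{D}_Y[t]g^t$. This is a $\mathcal{D}_{X\times Y}[s,t]$-module, and $\mathcal{D}_{X\times Y}[s](fg)^s$ is the image of the diagonal restriction $s=t$. More precisely, $\mathcal{D}_{X\times Y}[s](fg)^s\cong \mathcal{D}_X[s]f^s\otimes_{\mathbb{C}[s]}\mathcal{D}_Y[s]g^s$. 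Establishing this isomorphism needs the $\mathbb{C}[s]$-flatness (torsion-freeness) of $\mathcal{D}_X[s]f^s$, which holds since it embeds into $\mathbb{C}[\bm{x},f^{-1},s]f^s$.

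Under this identification, $\mathcal{D}_{X\times Y}[s](fg)^{s+1}$ is $f\,A\otimes g\,B$, where $A=\mathcal{D}_X[s]f^s$ and $B=\mathcal{D}_Y[s]g^s$. The quotient $A\otimes_{\mathbb{C}[s]}B/(fA\otimes gB)$ then surjects onto $M_f\otimes_{\mathbb{C}[s]}B$ and onto $A\otimes M_g$. It also carries the filtration
\[
fA\otimes gB\subset fA\otimes B\subset A\otimes B,
\]
with graded pieces $fA\otimes M_g$ and $M_f\otimes B$.

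\textbf{Hard direction: minimal polynomial.} Next I would show that the minimal polynomial of $s$ on this quotient is exactly $b_fb_g$, and not merely a divisor of it. One cannot just take an lcm: roots shared by $b_f$ and $b_g$ must have their multiplicities add. My first attempt is to localize at a root $\alpha$ and use the structure of $(M_f)_\alpha$ as a module over $\mathbb{C}[s]_{(s-\alpha)}$.

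Take $u\in A$ whose class in $M_f$ is killed by $(s-\alpha)^{a}$ and by no smaller power. Take $v\in B$ similarly with exponent $b$. I would then argue that $(s-\alpha)^{a+b-1}(u\otimes gv)$, or a suitable variant of it, is not in $fA\otimes gB$. The tool is Tor/flatness over $\mathbb{C}[s]$ together with the fact that $f$ and $g$ live in independent variables. Concretely, $A\otimes_{\mathbb{C}} B$ is free over the separate parameter rings, and the specialisation $s=t$ is a Koszul-type quotient by the single regular element $s-t$.

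For this step I would pass to $\mathbb{C}[s,t]$-modules. The element $(s-\alpha)^{a}(t-\alpha)^{b}$ kills $M_f\boxtimes M_g$. After reducing modulo $s-t$, multiplicities add exactly when $\mathrm{Tor}_1^{\mathbb{C}[s,t]}(M_f\boxtimes M_g,\mathbb{C}[s,t]/(s-t))$ behaves well. This holds because each of $M_f$ and $M_g$ is finitely generated, and locally a direct sum of modules $\mathbb{C}[s]/(s-\alpha)^k$ after localizing. Tensoring $\mathbb{C}[s]/(s-\alpha)^a$ with $\mathbb{C}[s]/(s-\alpha)^b$ over $\mathbb{C}[s]$ does \emph{not} add exponents, so the correct object is the quotient $A\otimes B/(fA\otimes gB)$. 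Inside it, $M_f$ and $M_g$ are glued through $A$ and $B$, not tensored. Making this precise, via a "Jordan block" computation for $s$ acting on $(A/fA)\otimes(B/gB)$-type extensions, is where I expect the real work to lie.
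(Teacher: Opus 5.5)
\textbf{There is a genuine gap: the hard direction is not proved.} Your easy direction is correct, and so is the reduction. Write $A=\mathcal{D}_X[s]f^s$, $A'=\mathcal{D}_X[s]f^{s+1}$, $B=\mathcal{D}_Y[s]g^s$, $B'=\mathcal{D}_Y[s]g^{s+1}$; your ``$fA$'' has to mean $A'$, not $f\cdot A$. Since these modules are torsion-free, hence flat, over the PID $\mathbb{C}[s]$, you get $\mathcal{D}_{X\times Y}[s](fg)^s/\mathcal{D}_{X\times Y}[s](fg)^{s+1}\cong Q:=(A\otimes_{\mathbb{C}[s]}B)/(A'\otimes B')$. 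Then $b_{fg}$ is the minimal polynomial of $s$ on $Q$, with the two-step filtration you describe.

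The divisibility $b_fb_g\mid b_{fg}$, which is the whole content, is never established. Your Tor heuristic is about the wrong object. $M_f\boxtimes M_g$ reduces modulo $s-t$ to $M_f\otimes_{\mathbb{C}[s]}M_g$, where exponents combine by $\min$, as you note yourself. So no statement about that $\mathrm{Tor}_1$ yields additivity. The supporting claim that $M_f$ is finitely generated over $\mathbb{C}[s]$ is also false. $M_f$ is a nonzero module over the Weyl algebra, hence infinite-dimensional over $\mathbb{C}$, while being killed by $b_f(s)$. Your last sentence concedes that the decisive ``Jordan block'' computation is still missing.

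\textbf{How to close it.} The computation is short and needs neither localisation nor $\mathbb{C}[s,t]$. Since $b_{fg}\mid b_fb_g$, it suffices to show, for every root $\alpha$, that $c=b_fb_g/(s-\alpha)$ does not kill the class of $f^s\otimes g^s$ in $Q$.

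Suppose first that $\alpha$ is a common root. Then $c\,(f^s\otimes g^s)=e\otimes\frac{b_g}{s-\alpha}g^s$ with $e=b_ff^s\in A'$, so this element lies in $A'\otimes B$. Its class in $(A'\otimes B)/(A'\otimes B')\cong A'\otimes M_g$ is $e\otimes m$, where $m=[\frac{b_g}{s-\alpha}g^s]\neq 0$ and $(s-\alpha)m=0$. Flatness of $A'$ makes the map $A'/(s-\alpha)A'\to A'\otimes M_g$, $[x]\mapsto x\otimes m$, injective. Moreover $e\notin(s-\alpha)A'$: otherwise torsion-freeness of $\mathcal{O}_X[f^{-1}][s]f^s$ would give $\frac{b_f}{s-\alpha}f^s\in A'$, contradicting minimality of $b_f$. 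This non-divisibility of $b_ff^s$ inside $A'$ is exactly the gluing between the two graded pieces that you were after.

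If $b_f(\alpha)=0\neq b_g(\alpha)$, project to $M_f\otimes B$ instead. The image $[\frac{b_f}{s-\alpha}f^s]\otimes b_gg^s$ is nonzero, because $B$ is flat and $b_gg^s\notin(s-\alpha)B$ (set $s=\alpha$ in $\mathcal{O}_Y[g^{-1}][s]$). The remaining case is symmetric.

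\textbf{Comparison with the paper.} The paper only cites this theorem. Its Section~\ref{s3} argument with $r_1=r_2=1$, where Definition~\ref{d3.1} is exactly $b_{fg}$, recovers it along Shi--Zuo's lines. One works with the submodules $M_{X,f}$ of the free modules $\mathcal{O}_X[f^{-1}][s]$. The product $b_fb_g$ comes from Lemma~\ref{l3.4} on intersections of tensor products over a PID, and the two extra intersection conditions are discarded by coefficient extraction. Your quotient picture, once completed as above, is the dual route: additivity of multiplicities comes from the extension structure of $Q$ rather than from Lemma~\ref{l3.4}.
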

	
	\par It is natural to ask whether Bernstein-Sato polynomials of ideals satisfy the multiplicative Thom-Sebastiani property. Shi and Zuo proposed the question whether the question raised by Budur and Popa is valid in the ideal setting.
	
	\begin{question}\label{q1.1}\textnormal{(Question 3.5 in \cite{c5})}
		For non-zero ideals $I\subset \mathbb{C}[\bm{x}]=\mathbb{C}[x_{1},\cdots ,x_{m}]$ and $J\subset \mathbb{C}[\bm{y}]=\mathbb{C}[y_{1},\cdots ,y_{n}]$, is it true that $b_{IJ}(s)=b_{I}(s)b_{J}(s)$, where $IJ\subset \mathbb{C}[\bm{x},\bm{y}]$ is the ideal generated by $\{fg|f\in I,g\in J\}$?
	\end{question}
	
	\par In \cite{c5}, Shi and Zuo discussed this question in some special cases. They proved that the answer to this question is affirmative when $I$ or $J$ is a principal ideal. For monomial ideals $I$ and $J$, they showed that the root set of $b_{IJ}(s)$ equals the root set of $b_{I}(s)b_{J}(s)$.
	
	
	\par In this paper, we establish the following result, answering Question \ref{q1.1} in full generality.
	
	\begin{thm}\label{t1.1}
		Let $I\subset \mathbb{C}[\bm{x}]$ and $J\subset \mathbb{C}[\bm{y}]$ be two non-zero ideals. Then we have
		\begin{align*}
			b_{IJ}(s)=b_{I}(s)b_{J}(s).
		\end{align*} 
	\end{thm}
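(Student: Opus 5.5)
We view $b_{IJ}$ through the Budur--Mustaţă--Saito description: with generators $f_1,\dots,f_r$ of $I$ and $g_1,\dots,g_p$ of $J$, the products $h_{ij}=f_ig_j$ generate $IJ$. We work with the graph embedding $X\times Y\hookrightarrow X\times Y\times\mathbb{C}^{rp}$ (coordinates $t_{ij}$) and with $b_{IJ}(s)$ as the minimal polynomial of $s=-\sum_{i,j}\partial_{t_{ij}}t_{ij}$ acting on $V^0/V^1$ of $\mathcal{D}\cdot\prod h_{ij}^{s_{ij}}$. Since the definition is independent of generators, we may choose generators of $I$ and of $J$ freely.

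\textbf{Divisibility $b_I(s)b_J(s)\mid b_{IJ}(s)$.}
\begin{enumerate}
\item Following the principal case of Theorem \ref{c1.1}, we introduce auxiliary variables $\bm{s}=(s_1,\dots,s_r)$ for $I$ and $\bm{u}=(u_1,\dots,u_p)$ for $J$.
\item We consider the specialization map from $\mathbb{C}[s_{ij}]$ to $\mathbb{C}[\bm{s},\bm{u}]$. It should send $\prod h_{ij}^{s_{ij}}$ to $\prod f_i^{s_i}\prod g_j^{u_j}$ whenever $\sum_j s_{ij}=s_i$ and $\sum_i s_{ij}=u_j$.
\item A functional equation for $IJ$ then yields a mixed functional equation of the form $b_{IJ}(|\bm{s}|)\,f^{\bm{s}}g^{\bm{u}}\in\sum\mathcal{D}[\bm{s},\bm{u}]\cdots$, subject to the constraint $|\bm{s}|=|\bm{u}|$.
\item We then pass to $\mathcal{D}_{X\times Y}$-modules that are tensor products of the $\mathcal{D}_X$- and $\mathcal{D}_Y$-pieces, and take the Kashiwara--Malgrange filtrations in the $t$ and $t'$ directions separately.
\item The graded piece of the product should then be the tensor product $\mathrm{Gr}_V(\text{for }I)\otimes\mathrm{Gr}_V(\text{for }J)$, with $s$ acting through the diagonal constraint.
\end{enumerate}
Concretely, we aim to prove an isomorphism of $\mathbb{C}[s]$-modules
\[
\mathrm{Gr}_V^{IJ}\cong \Big(\mathrm{Gr}_V^I\otimes_{\mathbb{C}}\mathrm{Gr}_V^J\Big)\Big/\big(s\otimes1-1\otimes s\big).
\]
This tensor identity is exactly the shape of argument used for Theorem \ref{c1.1}, but now with a multi-index structure.

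\textbf{From the isomorphism to $b_{IJ}=b_Ib_J$.} The isomorphism alone gives only the root set. To get multiplicities we use the principal-case trick: the minimal polynomial of $s$ on a quotient of the form $(M\otimes N)/(s\otimes 1-1\otimes s)$, with $M,N$ being $\mathbb{C}[s]$-modules killed by $b_I,b_J$ respectively, is not simply $\mathrm{lcm}$. In fact the structure is different: the relevant action is the sum $s\otimes1+1\otimes s$ shifted appropriately, reflecting that $|\bm{s}|$ for $IJ$ involves both blocks. The source of multiplicativity must be found correctly. My guess is to follow Shi--Zuo/Lee: write $b_{IJ}$ via $\mathcal{D}[s]$-annihilators of $f^{\bm{s}}g^{\bm{u}}$ restricted to the diagonal $|\bm{s}|=|\bm{u}|=s$. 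The key is that the quotient $\mathcal{D}[\bm{s},\bm{u}]f^{\bm{s}}g^{\bm{u}}/(\text{shift ideal})$ is an external tensor product over $\mathbb{C}[S,U]$, where $S=|\bm{s}|$ and $U=|\bm{u}|$. Its minimal polynomial in $S$ on the diagonal $S=U$ is then computed by the Chinese-remainder/product argument. The product $b_I(s)b_J(s)$ arises because the shift ideal for $IJ$ is generated by products of the "$I$-lowering" and "$J$-lowering" operators, so the relevant ideal is $\mathfrak a_I\cdot\mathfrak a_J$ rather than $\mathfrak a_I+\mathfrak a_J$.

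\textbf{Both inequalities, and the main obstacle.} The inclusion $\mathfrak a_I\mathfrak a_J\subseteq \mathfrak a_{IJ}$ gives $b_{IJ}\mid b_Ib_J$: we multiply the two functional equations, using $\binom{s_i}{k}$-type factors and the identity $\prod h_{ij}^{\cdots}$ with suitable choices of $s_{ij}$. The reverse inclusion, and hence equality, requires showing that the annihilator of $1\otimes1$ in $\big(\mathbb{C}[S]/\mathfrak{a}_I\big)\otimes$-type modules behaves like $\mathfrak a_I\mathfrak a_J$. For this we want flatness: the modules $\mathcal{D}_X[\bm{s}]f^{\bm{s}}$ and $\mathcal{D}_Y[\bm{u}]g^{\bm{u}}$ are flat over $\mathbb{C}$, so $\mathrm{Tor}$ vanishes and $\mathfrak a_I\otimes\mathfrak a_J$ injects. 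The main obstacle is this last step. Specifically, we must identify the $IJ$-side module, defined with the $rp$ variables $s_{ij}$, with the tensor product of the $I$- and $J$-side modules after restricting to the diagonal. This amounts to showing that the specialization $s_{ij}\mapsto(s_i,u_j)$ loses no information modulo the lowering operators. I would first attempt it using the generator-independence to replace $I$ and $J$ by generic linear combinations, reducing to $r=p$ and transversal "diagonal" generators $f_ig_i$. I would then verify injectivity by a $V$-filtration computation on the graph embedding.
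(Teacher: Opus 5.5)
There is a genuine gap, in two places.

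\textbf{The tensor identity is false.} Your displayed isomorphism cannot hold. Its right-hand side $(\mathrm{Gr}_V^I\otimes_{\mathbb{C}}\mathrm{Gr}_V^J)/(s\otimes 1-1\otimes s)$ is just $\mathrm{Gr}_V^I\otimes_{\mathbb{C}[s]}\mathrm{Gr}_V^J$. That module is killed by both $b_I(s)$ and $b_J(s)$, hence by $\gcd(b_I,b_J)$. For $I=(x)$, $J=(y)$ this would force $b_{xy}\mid s+1$, whereas $b_{xy}=(s+1)^2$. For $I=(x)$, $J=(y_1,y_2)$ it would force $b_{IJ}\mid\gcd(s+1,s+2)=1$, whereas $b_{IJ}=(s+1)(s+2)$. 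So it does not even give the root set.

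The product comes from a quotient of the shape $(M\otimes_{\mathbb{C}[s]}N)/(M'\otimes_{\mathbb{C}[s]}N')$ with $M'\subset M$, $N'\subset N$, not from $M/M'\otimes N/N'$. Showing that $b_Ib_J$ is exactly the minimal polynomial there (the paper's $b_{I,J}=b_Ib_J$, Proposition \ref{p3.5}) needs two things: the intersection formula of Lemma \ref{l3.4} for tensor products of submodules of free modules over a PID, and a filtration/coefficient-extraction argument. Flatness over $\mathbb{C}$ is automatic and gives nothing, since the relevant tensor product is over $\mathbb{C}[s]$.

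Your sketch of the easy direction $b_{IJ}\mid b_Ib_J$ is fine and matches Lemma \ref{l3.2}: multiply the two functional equations and redistribute the binomial factors via Vandermonde.

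\textbf{The hard direction has no mechanism.} Write $\bm u,\bm v,w_{ij}$ for your $\bm s,\bm u,s_{ij}$. The hard direction must turn an $IJ$-equation into a mixed equation over $R=\mathbb{C}[\bm u,\bm v]/(|\bm u|-|\bm v|)$. The $IJ$-equation has coefficients in $S=\mathbb{C}[w_{ij}]$ and lowering factors $\prod\binom{w_{ij}}{-\gamma_{ij}}$. The natural map goes $R\to S$, via $u_i\mapsto\sum_j w_{ij}$ and $v_j\mapsto\sum_i w_{ij}$. Your ``specialization'' $S\to R$ is never constructed, and none of the needed kind exists.

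To see this, suppose $r_1,r_2\ge 2$ and let $\phi:S\to R$ be $R$-linear with $\phi(1)=1$, sending each $\prod\binom{w_{ij}}{-\gamma_{ij}}S$ into $\prod_{\alpha_i<0}\binom{u_i}{-\alpha_i}\prod_{\beta_j<0}\binom{v_j}{-\beta_j}R$. The choice $\gamma_{11}=-1,\gamma_{22}=2$ gives $\phi(w_{11})\in u_1v_1R$. The choice $\gamma_{12}=-1,\gamma_{21}=2$ gives $\phi(w_{12})\in u_1v_2R$. Hence $u_1=\phi(w_{11}+w_{12})\in u_1(v_1,v_2)R$, which is impossible in the graded domain $R$. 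So denominators are unavoidable.

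The missing idea is the paper's $R$-linear but non-multiplicative map $\sigma:S\to L^{-1}R$, where $L$ is generated by the $s-N$ with $N\ge 0$. It is defined on the falling-factorial basis by
\[
\prod_{i,j}(w_{ij})^{\underline{k_{ij}}}\mapsto\prod_i(u_i)^{\underline{k_{i-}}}\prod_j(v_j)^{\underline{k_{-j}}}\Big/(s)^{\underline{|K|}}.
\]
It is $R$-linear precisely because $|\bm v|=s$. It respects the lowering factors (Lemma \ref{l4.3}). Its denominators are cleared at the end because the roots of $b_{I,J}$ are negative. For instance, $\sigma(w_{11})=u_1v_1/s$, which is exactly what the obstruction above demands.

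Your proposed workaround does not help. You cannot reduce to ``diagonal'' generators $f_ig_i$, because they do not generate $IJ$; for example, $(x_1,x_2)(y_1,y_2)$ needs four generators. An unspecified $V$-filtration computation on the graph embedding does not supply the missing map either.
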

	
	\begin{remark}\label{r1.5}
		Compared with the single polynomial case, the Bernstein–Sato polynomial of an ideal involves more formal indeterminates. Motivated by the method in \cite{c5}, we obtain a useful characterization of $b_{I}(s)b_{J}(s)$, which we call the product-type weak Bernstein-Sato polynomial and denote by $b_{I, J}(s)$. However, it is not immediately obvious that $b_{I, J}(s)=b_{IJ}(s)$, because the presence of more formal indeterminates introduce new technical difficulties. We overcome these difficulties view a new approach. The desired morphism between the two functional equations  does not exist in general. We show that it does exist
		after inverting a suitable family of polynomials in $s$.
	\end{remark}
	
	\par This paper is organized as follows.
	
	\par In Section \ref{s2}, we provide preliminaries about Bernstein-Sato polynomials. In Section \ref{s3}, we define the product-type weak Bernstein-Sato polynomial $b_{I,J}(s)$ for two ideals $I\subset \mathbb{C}[\bm{x}]$ and $J\subset \mathbb{C}[\bm{y}]$. Then we prove that $b_{I}(s)b_{J}(s)=b_{I,J}(s)$. In Section \ref{s4}, we prove $
	b_{IJ}(s)=b_{I,J}(s)$, which completes the proof of Theorem \ref{t1.1}. In Section \ref{s5}, we give an alternative proof of the identity $b_{I}(s)b_{J}(s)=b_{I,J}(s)$ using results and methods from \cite{c6}.
	\begin{center}
		\begin{ack}
			We thank Quan Shi for helpful discussions. H. Zuo acknowledges support from NSFC (grant No. 12271280) and BJNSF (grant No. 1252009).
		\end{ack}
	\end{center}

	\section{Bernstein-Sato polynomials}\label{s2}
	
	\par In this section, we recall some basic results on the Bernstein-Sato polynomial.
	
	\par Bernstein-Sato polynomials are difficult to compute in general. Nevertheless, their roots are known to satisfy strong constraints. Kashiwara proved the following important result.
	
	\begin{thm}\label{t2.1}\textnormal{(\cite{c8})}
		Let $f\in \mathbb{C}[\bm{x}]\setminus \mathbb{C}$. Then the roots of $b_{f}(s)$ are negative rational numbers.
	\end{thm}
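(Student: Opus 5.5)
Kashiwara's rationality theorem (Theorem \ref{t2.1}) has two parts: that $b_{f}(s)$ is nonzero, and that its roots are negative rationals. The plan follows Kashiwara's original strategy through a resolution of singularities.

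\textbf{Step 1: existence and negativity.} Bernstein's theorem \cite{c1} gives $b_{f}(s)\neq 0$. Since $f$ is nonconstant, we may pick a smooth point $p$ of $f^{-1}(0)$. Near $p$, in local analytic coordinates, $f=x_{1}$, and then $b_{f}(s)$ is divisible by $s+1$; hence $-1$ is a root. For negativity of all roots, a positivity argument suffices. The archimedean zeta function $\int |f|^{2s}\varphi$ is holomorphic for $\mathrm{Re}\, s>0$. The functional equation expresses it through its shift by $s+1$ divided by $b_{f}(s)^{2}$. This shows every pole lies among $\{\alpha-k\}$ for roots $\alpha$, but it does not directly bound the roots. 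So I would instead rely on Step 2, which gives negativity and rationality simultaneously.

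\textbf{Step 2: resolution and the pushforward.} Take an embedded resolution $\pi\colon Y\to X$ with $f\circ\pi$ locally a monomial $u\,y_{1}^{a_{1}}\cdots y_{m}^{a_{m}}$, and $\pi^{*}dx$ locally $y^{k}\,dy$ for integers $a_{i}\geq 0$ and $k_{i}\geq 0$. On $Y$, the module $\mathcal{D}_{Y}[s](f\circ\pi)^{s}$ admits local $b$-functions that are explicitly products of factors $(a_{i}s+j)$ with $j\geq 1$. These come from $\prod_{i}\partial_{y_{i}}^{a_{i}}$ acting on a monomial, with the unit $u$ absorbed after a coordinate change. Twisting by the relative canonical bundle, the relevant factors become $(a_{i}s+k_{i}+j)$ with $j\geq 1$, whose roots are negative rationals.

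\textbf{Step 3: descent.} This is the main obstacle. Let $N=\mathcal{D}_{Y}[s]\,(f\circ\pi)^{s}\,\omega_{Y/X}$, and consider the direct image $\int_{\pi}N$. Kashiwara's argument uses two facts. First, the $\mathcal{D}_{X}[s]$-module $\mathcal{D}_{X}[s]f^{s}$ embeds, up to a shift $s\mapsto s+\ell$, as a subquotient of $H^{0}\int_{\pi}N$. Second, the coherence of direct images under proper maps shows that the endomorphism $s$ on the pushforward satisfies a polynomial killed by $\prod_{i,j}(a_{i}s+k_{i}+j)$, with $j$ ranging over a finite set. Consequently
\[
b_{f}(s)\ \Big|\ \prod_{i,j}\bigl(a_{i}(s+\ell)+k_{i}+j\bigr)
\]
for some integer $\ell\geq 0$, which gives rationality.

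\textbf{Negativity.} Negativity is the most delicate point. I expect it to be the step requiring the most care: tracking the shift $\ell$ and the sign constraints precisely enough to conclude that every root of $b_{f}$ is $<0$. The plan is to obtain it from a sharper version of the descent in which $\ell=0$, using the Jacobian twist. Alternatively, one can combine rationality with Bernstein's analytic continuation. The meromorphic function $\int |f|^{2s}\varphi$ has its poles among $\{\alpha-k\}$ for roots $\alpha$, and a minimal-root argument shows that a root $\alpha\geq 0$ would contradict holomorphy on $\mathrm{Re}\, s>-\varepsilon$.
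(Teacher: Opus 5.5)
The paper gives no proof of this statement; it quotes it from Kashiwara \cite{c8}, whose proof is the resolution-plus-proper-direct-image argument you sketch. Your Step 3, which carries the whole content of that argument, is misstated. As a result you leave negativity open and offer an invalid substitute for it.

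\textbf{Step 3 as stated fails.} The second ``fact'' is false. Write $\tilde f=f\circ\pi$. Then $s$ acts injectively on $N=\mathcal{D}_{Y}[s]\tilde f^{s}\omega_{Y/X}$ and on $H^{0}\int_{\pi}N$, so no polynomial in $s$ annihilates the pushforward. What $\tilde b(s)=\prod_{i}\prod_{j=1}^{a_{i}}(a_{i}s+k_{i}+j)$ kills is the quotient $N/tN$, where $t$ is the shift $\tilde f^{s}\mapsto \tilde f^{s+1}$, $s\mapsto s+1$. Since $H^{0}\int_{\pi}$ is not exact, this annihilation does not transfer directly to $\mathcal{D}_{X}[s]f^{s}/\mathcal{D}_{X}[s]f^{s+1}$.

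\textbf{The missing mechanism.} Let $\mathcal{M}$ be the image of the integration map $H^{0}\int_{\pi}N\to\mathcal{O}_{X}[f^{-1},s]f^{s}$, which exists because $\pi$ is an isomorphism off $f^{-1}(0)$. The long exact sequence of $0\to tN\to N\to N/tN\to 0$ gives $\tilde b(s)\mathcal{M}\subset t\mathcal{M}$. Since $t\,\phi(s)=\phi(s+1)\,t$, iterating yields
\[
\tilde b(s)\tilde b(s+1)\cdots\tilde b(s+m)\,\mathcal{M}\subset t^{m+1}\mathcal{M}.
\]
Two further inputs close the argument:
\begin{itemize}
\item The Jacobian twist guarantees $f^{s}\in\mathcal{M}$, because the generator $\tilde f^{s}\pi^{*}dx$ integrates to $f^{s}dx$.
\item Coherence of the proper direct image gives local finite generation, hence $\mathcal{M}\subset\mathcal{D}_{X}[s]f^{s-m}$ for some $m$.
\end{itemize}
Together these give $b_{f}(s)\mid\prod_{\ell=0}^{m}\tilde b(s+\ell)$. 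This is a product over a range of shifts, not a single shift $\ell$ as in your formula.

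\textbf{Negativity.} In the correct form above, negativity is immediate, not ``delicate'': every root is $-(k_{i}+j)/a_{i}-\ell$ with $a_{i}\geq 1$, $j\geq 1$ and $k_{i},\ell\geq 0$. Your fallback argument does not work. The poles of $\int|f|^{2s}\varphi$ lie in $\{\alpha-k\}$, but nothing forces a root $\alpha\geq 0$ of $b_{f}$ to produce any pole. So there is no contradiction with holomorphy on $\mathrm{Re}\,s>-\varepsilon$, and no ``minimal-root argument'' is available.

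\textbf{Minor point.} In Step 1, $f$ is not locally $x_{1}$ near smooth points of $f^{-1}(0)$ when $f$ is non-reduced, e.g.\ $f=x_{1}^{2}$. In any case, the fact that $-1$ is a root plays no role in the statement.
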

	
	\par For an ideal $I\subset \mathbb{C}[\bm{x}]$, the roots of $b_{I}(s)$ are negative rational numbers. This follows from the following result of Mustaţă, see also \cite{c2}.
	
	\begin{thm}\label{t2.2}\textnormal{(Theorem 1.1 in \cite{c6})}
		Let $I\subset \mathbb{C}[\bm{x}]$ be an ideal generated by $f_{1},\cdots ,f_{r}$. Suppose $F=f_{1}y_{1}+\cdots +f_{r}y_{r}$. Then we have
		\begin{align*}
			b_{F}(s)=(s+1)b_{I}(s).
		\end{align*}
	\end{thm}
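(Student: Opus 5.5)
The plan is to compare the two functional equations directly, using the fact that $F=\sum_i f_iy_i$ is linear in the auxiliary variables $\bm{y}=(y_1,\cdots,y_r)$. Formally expanding,
\begin{align*}
F^{s}=\sum_{|\bm{\beta}|=s}\binom{s}{\bm{\beta}}\,\bm{y}^{\bm{\beta}}f_1^{\beta_1}\cdots f_r^{\beta_r},
\end{align*}
so the $\mathcal{D}_{X\times \mathbb{C}^r}[s]$-module generated by $F^s$ should be identified with the module $\mathcal{D}_X[\bm{s}]\,f_1^{s_1}\cdots f_r^{s_r}$, via a graded dictionary. On the monomial $\bm{y}^{\bm\beta}f^{\bm\beta}$ the Euler operator $y_i\partial_{y_i}$ acts by $\beta_i$, so $s_i\leftrightarrow y_i\partial_{y_i}$ and $|\bm{s}|\leftrightarrow \sum_i y_i\partial_{y_i}$; the latter acts on $F^s$ as $s$. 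Moreover $\partial_{y_i}F^{s+1}=(s+1)f_iF^{s}$. Iterating this, $\partial_{y_i}^{k}$ produces exactly the falling factorials that appear in $\binom{s_i}{k}f_i^{s_i-k}$, while multiplication by $y_i$ raises the corresponding exponent. To make this rigorous I would follow the Fourier transform in the $\bm{y}$-variables, which exchanges $y_i\leftrightarrow\partial_{y_i}$ and is compatible with the Budur--Mustaţă--Saito construction recalled in Section \ref{s1}.

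\emph{First direction: $b_F(s)$ divides $(s+1)b_I(s)$.} Start from the defining equation
\begin{align*}
b_I(|\bm{s}|)f^{\bm{s}}=\sum_{|\bm\alpha|=1}P_{\bm\alpha}\prod_{\alpha_i<0}\binom{s_i}{-\alpha_i}f^{\bm{s}+\bm\alpha}.
\end{align*}
Translate each term through the dictionary: $s_i$ becomes $y_i\partial_{y_i}$, and the binomial-weighted shifts become products $\bm{y}^{\bm\alpha^+}\partial_{\bm y}^{\bm\alpha^-}$ applied to a power of $F$. This yields an identity of the form $b_I(s)F^s\in\mathcal{D}[s]\cdot\partial_{\bm y}F^{s+1}$ up to the factor $(s+1)$ that comes from $\partial_{y_i}F^{s+1}=(s+1)f_iF^s$. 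Multiplying through gives $(s+1)b_I(s)F^s\in\mathcal{D}[s]F^{s+1}$.

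\emph{Second direction: $(s+1)b_I(s)$ divides $b_F(s)$.}
\begin{itemize}
\item[(a)] For the factor $s+1$: on the open set where some $f_i\neq 0$, $F$ is smooth and reduced, so $s+1$ divides $b_F$ by the usual local argument (a local $b$-function divides the global one).
\item[(b)] For $b_I$: take a functional equation $b_F(s)F^s=Q(s)F^{s+1}$ and decompose $Q$ into its $\bm{y}$-weight components for the torus action scaling the $y_i$. Keep only the components that preserve the relevant weight, and rewrite them through the dictionary backwards. This gives an element of the ideal-type functional equation with polynomial $b_F(s)$. Since $s+1$ comes from the smooth part and does not interact with the weight-$0$ piece, I then need to strip off exactly one factor $s+1$.
\end{itemize}

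The main obstacle is (b), specifically the bookkeeping of the factor $(s+1)$. Converting $\mathcal{D}[s]F^{s+1}$ back into the BMS sum requires dividing the $\partial_{y}$-images by $(s+1)$, which is not obviously legitimate in $\mathcal{D}[s]$. The tool I would try first is the $V$-filtration characterization of $b_I$ from \cite{c2}. Under the Fourier transform in $\bm{y}$, the $V$-filtration along $\{t=F\}$ on $X\times\mathbb{C}^r$ should match the $V$-filtration along the graph of $(f_1,\cdots,f_r)$. The shift by $r$ coming from $\partial_{y_i}y_i=y_i\partial_{y_i}+1$ should then cancel against the homogeneity, leaving exactly one extra root $-1$.
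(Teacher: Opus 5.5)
\paragraph{The gap.} The missing piece is the converse divisibility $(s+1)b_I(s)\mid b_F(s)$, which is the real content of the theorem.

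Step (a) gives only $(s+1)\mid b_F(s)$. Together with $b_I(s)\mid b_F(s)$ this still misses the multiplicity when $-1$ is a root of $b_I$; for $I=(x)$ one has $b_I=s+1$ and $b_F=b_{xy}=(s+1)^2$. Step (b) stops exactly at ``rewrite through the dictionary backwards'', and the proposed matching of $V$-filtrations under Fourier transform is only asserted.

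You have also misdiagnosed the obstacle. Let $E=\sum_i y_i\partial_{y_i}$. Every term $\bm y^{\bm\alpha}\partial_{\bm y}^{\bm\alpha'}$ in the $E$-weight $-1$ part of $Q$ (the only part that contributes) has $|\bm\alpha'|=|\bm\alpha|+1\ge 1$. So $\partial_{\bm y}^{\bm\alpha'}F^{s+1}$ already carries the factor $s+1$, and you may cancel it because $\mathcal{O}_{X\times\mathbb{C}^r}[F^{-1}][s]F^s$ is $\mathbb{C}[s]$-torsion-free. What is actually missing is a rigorous passage from an identity about $F^s$ to one about $f_1^{s_1}\cdots f_r^{s_r}$. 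These live in different modules, and your multinomial expansion of $F^s$ only makes sense for $s\in\mathbb{Z}_{\ge 0}$.

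\paragraph{How \cite{c6} closes it.} The paper does not reprove the statement. The argument of \cite{c6}, rerun in Section~\ref{s5} for $F_IG_J$, supplies exactly this step and needs no $V$-filtrations.

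Write $Q=\sum\frac{1}{\bm\alpha'!}P_{\bm\alpha,\bm\alpha'}\bm y^{\bm\alpha}\partial_{\bm y}^{\bm\alpha'}$ with $P_{\bm\alpha,\bm\alpha'}\in\mathcal{D}_X[s]$. Specialize $s=l\in\mathbb{Z}_{\ge0}$ and compare coefficients of $\bm y^{\bm a}$ with $|\bm a|=l$. The identity
\[
\binom{l+1}{\bm a+\bm\alpha'-\bm\alpha}\prod_i\binom{a_i+\alpha_i'-\alpha_i}{\alpha_i'}=(l+1)\binom{l}{\bm a}\prod_i\frac{\alpha_i!}{\alpha_i'!}\binom{a_i}{\alpha_i}
\]
cancels $(l+1)\binom{l}{\bm a}$ on both sides. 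This gives
\[
\frac{b_F(l)}{l+1}f_1^{a_1}\cdots f_r^{a_r}=\sum_{|\bm\alpha'|-|\bm\alpha|=1}P_{\bm\alpha,\bm\alpha'}(l)\cdot\prod_i\frac{\alpha_i!}{\alpha_i'!}\binom{a_i}{\alpha_i}\,f_1^{a_1+\alpha_1'-\alpha_1}\cdots f_r^{a_r+\alpha_r'-\alpha_r}
\]
for all $\bm a\in\mathbb{Z}^r_{\ge0}$. By Zariski density you may replace $\bm a$ by $\bm s$ and $l$ by $|\bm s|$. Put $\mu_i=\alpha_i'-\alpha_i$. Whenever $\mu_i<0$, the condition $\alpha_i'\ge 0$ forces $\alpha_i\ge-\mu_i$, so $\binom{s_i}{\alpha_i}\in\binom{s_i}{-\mu_i}\mathbb{C}[s_i]$. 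Hence the result is a Budur--Musta\c{t}\u{a}--Saito functional equation with polynomial $b_F(s)/(s+1)$.

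\paragraph{First direction.} Your dictionary is reversed. The term indexed by $\bm\alpha$ corresponds to $\bm y^{\bm\alpha^-}\partial_{\bm y}^{\bm\alpha^+}F^{s+1}$: $\partial_{y_i}$ raises the exponent of $f_i$, and multiplication by $y_i$ is what produces $\binom{s_i}{-\alpha_i}$. Your $\bm y^{\bm\alpha^+}\partial_{\bm y}^{\bm\alpha^-}$ sends $F^{s+1}$ to $\bm y$-degree $s+2$ rather than $s$. With this fixed, the same coefficient comparison at $s=l$ proves $b_F(s)\mid(s+1)b_I(s)$. This uses that an identity of the form $G(s,\bm x,\bm y)F^{s-N}=c(s)F^s$ holding for infinitely many $l$ holds identically.
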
	
	
	\par The Bernstein-Sato polynomial $b_{I}(s)$ is unchanged under automorphisms of the ambient space.
	
	\begin{prop}\label{p2.5}
		Let $W$ be a $\mathbb{C}$-algebra automorphism of $\mathbb{C}[x_{1},\cdots ,x_{m}]$ (equivalently, an automorphism induced by a regular automorphism of $X=\mathbb{C}^{m}$). Then for every ideal $I\subset \mathbb{C}[x_{1},\cdots ,x_{m}]$, we have
		\begin{align*}
			b_{W(I)}(s)=b_{I}(s).
		\end{align*}
	\end{prop}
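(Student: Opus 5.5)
The plan is to transport the defining functional equation of $b_{I}(s)$ along $W$. The automorphism $W$ of $\mathbb{C}[\bm{x}]$ comes from a regular automorphism $\phi$ of $X$, and it therefore induces a ring automorphism $\widetilde{W}$ of $\mathcal{D}_{X}$. This $\widetilde{W}$ agrees with $W$ on functions and sends a derivation $\partial$ to $W\circ\partial\circ W^{-1}$. Since conjugation by a ring automorphism preserves the defining property of derivations, $W\circ\partial\circ W^{-1}$ is again a $\mathbb{C}$-derivation of $\mathbb{C}[\bm{x}]$, hence lies in $\mathcal{D}_{X}$. Extending $\widetilde{W}$ $\mathbb{C}[\bm{s}]$-linearly gives an automorphism of $\mathcal{D}_{X}[\bm{s}]$.

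Fix generators $f_{1},\cdots,f_{r}$ of $I$, so that $W(f_{1}),\cdots,W(f_{r})$ generate $W(I)$. Let $M$ be the free $\mathbb{C}[\bm{x}]_{f_{1}\cdots f_{r}}[\bm{s}]$-module on the symbol $\bm{f}^{\bm{s}}=f_{1}^{s_{1}}\cdots f_{r}^{s_{r}}$. It is a $\mathcal{D}_{X}[\bm{s}]$-module, with $\partial$ acting through the usual logarithmic derivative formula. Define $M'$ in the same way for the $W(f_{i})$. Let $\Phi:M\to M'$ be the additive bijection that is $W$ on coefficients, is the identity on $\bm{s}$, and sends $\bm{f}^{\bm{s}+\bm{\alpha}}$ to $W(\bm{f})^{\bm{s}+\bm{\alpha}}$.

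The key step is to check that $\Phi(P\cdot u)=\widetilde{W}(P)\cdot\Phi(u)$ for $P\in\mathcal{D}_{X}[\bm{s}]$ and $u\in M$. It suffices to check this for $P$ a function, which is immediate, and for $P=\partial$ a derivation. For the derivation case, write $\partial\bigl(g\,\bm{f}^{\bm{s}+\bm{\alpha}}\bigr)=\bigl(\partial g+g\sum_{i}(s_{i}+\alpha_{i})\partial(f_{i})/f_{i}\bigr)\bm{f}^{\bm{s}+\bm{\alpha}}$, and note that $W(\partial h)=(W\partial W^{-1})(W h)$ for every $h$. Applying this to $h=g$ and to $h=f_{i}$ gives the formula. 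This verification is the only real content of the proof, and it is routine.

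Applying $\Phi$ to a functional equation $b_{I}(|\bm{s}|)\bm{f}^{\bm{s}}=\sum_{\bm{\alpha}}P_{\bm{\alpha}}\prod_{\alpha_{i}<0}\binom{s_{i}}{-\alpha_{i}}\bm{f}^{\bm{s}+\bm{\alpha}}$ gives the same equation for the generators $W(f_{i})$, with operators $\widetilde{W}(P_{\bm{\alpha}})$. Here we use that $\Phi$ is $\mathbb{C}[\bm{s}]$-linear, so the binomial factors and $b_{I}(|\bm{s}|)$ pass through unchanged. Hence $b_{W(I)}(s)$ divides $b_{I}(s)$. Running the same argument with $W^{-1}$ applied to $W(I)$ gives the reverse divisibility. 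Since both polynomials are monic, they are equal. The independence of $b_{I}(s)$ from the choice of generators, established in \cite{c2}, guarantees that using the generators $W(f_{i})$ of $W(I)$ is legitimate.
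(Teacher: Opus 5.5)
Your proposal is correct and follows the paper's proof: you transport the functional equation along the conjugation automorphism $\widetilde{W}$ of $\mathcal{D}_{X}[\bm{s}]$ to get $b_{W(I)}(s)\mid b_{I}(s)$, then run the same argument with $W^{-1}$ for the reverse divisibility. The only difference is that you make explicit the compatibility $\Phi(P\cdot u)=\widetilde{W}(P)\cdot\Phi(u)$, which the paper leaves implicit, and your conjugation $W\circ P\circ W^{-1}$ is the right direction for producing the generators $W(f_{i})$ (the paper's $W^{-1}\circ P\circ W$ is a harmless convention slip).
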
 
	\begin{proof}[Proof. ]
		It suffices to show $b_{W(I)}(s)|b_{I}(s)$ since applying the same argument to $W^{-1}$ yields the reverse divisibility. Suppose $I$ is generated by $f_{1},\cdots ,f_{r}$, so that $W(I)=(W(f_{1}),\cdots ,W(f_{r}))$. The automorphism $W$ induces an automorphism
		\begin{align*}
			\widetilde{W}:\mathcal{D}_{X}\to \mathcal{D}_{X},P\mapsto W^{-1}\circ P\circ W.
		\end{align*}
		It extends to $\mathcal{D}_{X}[s_{1},\cdots ,s_{r}]$ by fixing $s_{1},\cdots ,s_{r}$. Suppose we have
		\begin{align*}
			b_{I}(|\bm{s}|)f_{1}^{s_{1}}\cdots f_{r}^{s_{r}}= \sum_{\bm{\alpha}\in \mathbb{Z}^{r},|\bm{\alpha}|=1}P_{\bm{\alpha}}(\bm{s})\cdot \bigg( \prod_{\alpha_{i}<0}\binom{s_{i}}{-\alpha_{i}}\bigg)f_{1}^{s_{1}+\alpha_{1}}\cdots f_{r}^{s_{r}+\alpha_{r}}.
		\end{align*}
		Applying $\widetilde{W}$ to both sides, we get
		\begin{align*}
		    b_{I}(|\bm{s}|)W(f_{1})^{s_{1}}\cdots W(f_{r})^{s_{r}}= \sum_{\bm{\alpha}\in \mathbb{Z}^{r},|\bm{\alpha}|=1}\widetilde{W}(P_{\bm{\alpha}}(\bm{s}))\cdot \bigg( \prod_{\alpha_{i}<0}\binom{s_{i}}{-\alpha_{i}}\bigg)W(f_{1})^{s_{1}+\alpha_{1}}\cdots W(f_{r})^{s_{r}+\alpha_{r}}.
		\end{align*}
		It follows that $b_{W(I)}(s)|b_{I}(s)$.
	\end{proof}
	
	\begin{eg}\label{e2.3}
		Let $I=(x_{1},\cdots ,x_{m})\subset \mathbb{C}[x_{1},\cdots ,x_{m}]$. The Bernstein-Sato polynomial is
		\begin{align*}
			b_{I}(s)=s+m,
		\end{align*}
		satisfying the functional equation
		\begin{align*}
			(s_{1}+\cdots +s_{m}+m)x_{1}^{s_{1}}\cdots x_{m}^{s_{m}}=\sum\limits_{i=1}^{m}\partial_{x_{i}}\prod\limits_{j=1}^{m}x_{j}^{s_{j}+\delta_{ij}},
		\end{align*}
		where $\delta_{ij}$ is the Kronecker delta..
	\end{eg}
	
	\begin{eg}\textnormal{(\cite{c2})}\label{e2.4}
		Let $I=(x_{1}x_{2},x_{2}x_{3},x_{1}x_{3})\subset \mathbb{C}[x_{1},x_{2} ,x_{3}]$. The Bernstein-Sato polynomial is
		\begin{align*}
			b_{I}(s)=(s+\frac{3}{2})(s+2)^{2}.
		\end{align*}
	\end{eg}
	
	\par Suppose $I$ is generated by $f_{1},\cdots ,f_{r}$. We give another characterization of $b_{I}(s)$, which will be useful later. For all $P\in \mathcal{D}_{X}[\bm{s}]$ and multi-indices $\bm{\alpha}\in \mathbb{Z}^{r}$, one can see that 
	\begin{align*}
		P\cdot f_{1}^{s_{1}+\alpha_{1}}\cdots f_{r}^{s_{r}+\alpha_{r}}\in \mathcal{O}_{X}[\frac{1}{f_{1}\cdots f_{r}}][\bm{s}]\cdot f_{1}^{s_{1}}\cdots f_{r}^{s_{r}}.
	\end{align*}
	We denote by $A_{X,I}$ the $\mathcal{O}_{X}[\bm{s}]$-module
	\begin{align*}
		\mathcal{O}_{X}[\frac{1}{f_{1}\cdots f_{r}}][\bm{s}],
	\end{align*}
	and by $M_{X,I}$ the $\mathcal{O}_{X}[\bm{s}]$-module
	\begin{align*}
		\{ F\in A_{X,I}|Ff_{1}^{s_{1}}\cdots f_{r}^{s_{r}}\in \sum_{\bm{\alpha}\in \mathbb{Z}^{r},|\bm{\alpha}|=1}\mathcal{D}_{X}[\bm{s}]\cdot \bigg( \prod_{\alpha_{i}<0}\binom{s_{i}}{-\alpha_{i}}\bigg)f_{1}^{s_{1}+\alpha_{1}}\cdots f_{r}^{s_{r}+\alpha_{r}}\}.
	\end{align*}
	We regard $\mathbb{C}[s]$ as a subring of $\mathbb{C}[\bm{s}]$ via the embedding $s\mapsto |\bm{s}|$. Then the Bernstein-Sato polynomial $b_{I}(s)$ is the unique monic generator of $M_{X,I}\cap \mathbb{C}[s]$. We denote by 
	\begin{align*}
		\mu_{X,I}:\mathbb{C}[s]\to A_{X,I}
	\end{align*}
	the natural embedding. One can restrict $\mu_{X,I}$ to the ideal $b_{I}(s)\cdot \mathbb{C}[s]\subset \mathbb{C}[s]$ to define 
	\begin{align*}
		\tau_{X,I}:b_{I}(s)\cdot \mathbb{C}[s]\to M_{X,I}.
	\end{align*}
	We have a Cartesian diagram of $\mathbb{C}[s]$-modules with all four arrows injective.
	\begin{equation*}
		\begin{tikzpicture}[baseline=(current bounding box.center)]
			\node(a) at (0,2){$b_{I}(s)\cdot \mathbb{C}[s]$};
			\node(b) at (3,2){$\mathbb{C}[s]$};
			\node(c) at (0,0){$M_{X,I}$};   
			\node(d) at (3,0){$A_{X,I}$};
			\draw[->] (a)--(b);
			\draw[->] (b)--(d) node[midway,right]{$\mu_{X,I}$};
			\draw[->] (a)--(c) node[midway,left]{$\tau_{X,I}$};
			\draw[->] (c)--(d);
		\end{tikzpicture}
	\end{equation*}

	\section{Product-type weak Bernstein-Sato polynomial}\label{s3}
	
	\par Let $X=\mathbb{C}^{m}$ and $Y=\mathbb{C}^{n}$. Let $I\subset \mathcal{O}_{X}=\mathbb{C}[\bm{x}]$ and $J\subset \mathcal{O}_{Y}=\mathbb{C}[\bm{y}]$ be two ideals. Suppose $I$ is generated by $f_{1},\cdots ,f_{r_{1}}$ and $J$ is generated by $g_{1},\cdots ,g_{r_{2}}$. We will define the product-type weak Bernstein-Sato polynomial $b_{I,J}(s)$ and show that $b_{IJ}(s)|b_{I,J}(s)$ and $b_{I,J}(s)=b_{I}(s)b_{J}(s)$ in this section. 
	
	\par We will use distinct formal indeterminates $u_{i}$, $v_{j}$, and $w_{ij}$ for the functional equations of $b_{I}(s)$, $b_{J}(s)$, and $b_{IJ}(s)$ respectively. To be explicit, we have 
	\begin{align*}
		& b_{I}(|\bm{u}|)f_{1}^{u_{1}}\cdots f_{r_{1}}^{u_{r_{1}}}\in \sum_{\bm{\alpha}\in \mathbb{Z}^{r_{1}},|\bm{\alpha}|=1}\mathcal{D}_{X}[\bm{u}]\cdot \bigg( \prod_{\alpha_{i}<0}\binom{u_{i}}{-\alpha_{i}}\bigg)f_{1}^{u_{1}+\alpha_{1}}\cdots f_{r_{1}}^{u_{r_{1}}+\alpha_{r_{1}}},\\
		& b_{J}(|\bm{v}|)g_{1}^{v_{1}}\cdots g_{r_{2}}^{v_{r_{2}}}\in \sum_{\bm{\beta}\in \mathbb{Z}^{r_{2}},|\bm{\beta}|=1}\mathcal{D}_{Y}[\bm{v}]\cdot \bigg( \prod_{\beta_{j}<0}\binom{v_{j}}{-\beta_{j}}\bigg)g_{1}^{v_{1}+\beta_{1}}\cdots g_{r_{2}}^{v_{r_{2}}+\beta_{r_{2}}},\\
		& b_{IJ}(|\bm{w}|)\prod_{i,j}(f_{i}g_{j})^{w_{ij}}\in \sum_{\bm{\gamma}\in \mathbb{Z}^{r_{1}r_{2}},|\bm{\gamma}|=1}\mathcal{D}_{X\times Y}[\bm{w}]\cdot \bigg( \prod_{\gamma_{ij}<0}\binom{w_{ij}}{-\gamma_{ij}}\bigg)\prod_{i,j}(f_{i}g_{j})^{w_{ij}+\gamma_{ij}}.
	\end{align*}
	The $\mathbb{C}[s]$-module structures on $\mathbb{C}[\bm{u}]$, $\mathbb{C}[\bm{v}]$, and $\mathbb{C}[\bm{w}]$ are defined respectively by
	\begin{center}
		$s\mapsto |\bm{u}|$,\\
		$s\mapsto |\bm{v}|$,\\
		$s\mapsto |\bm{w}|$.
	\end{center}	
	Intuitively, the relations among $u_{i}$, $v_{j}$, and $w_{ij}$ can be described as follows.
	\begin{align*}
		\begin{matrix}
			w_{11} & w_{12} & \cdots & w_{1r_{2}} & \to & u_{1}\\
			w_{21} & w_{22} & \cdots & w_{2r_{2}} & \to & u_{2}\\
			\vdots & \vdots & \ddots & \vdots & & \vdots \\
			w_{r_{1}1} & w_{r_{1}2} & \cdots & w_{r_{1}r_{2}} & \to & u_{r_{1}}\\
			\downarrow & \downarrow & & \downarrow & & \\
			v_{1} & v_{2} & \cdots & v_{r_{2}} & & 
		\end{matrix}
	\end{align*}
	
	\par This notation will be used throughout the rest of the paper.
	
	\subsection{Definition of product-type weak Bernstein-Sato polynomial.}
	
	\par $\quad$
	
	\par We first motivate the definition of the product-type weak Bernstein-Sato polynomial. For $I$ and $J$, we have two Cartesian diagrams.
	\begin{equation}\label{t1}
		\begin{tikzpicture}[baseline=(current bounding box.center)]
			\node(a) at (0,2){$b_{I}(s)\cdot \mathbb{C}[s]$};
			\node(b) at (3,2){$\mathbb{C}[s]$};
			\node(c) at (0,0){$M_{X,I}$};   
			\node(d) at (3,0){$A_{X,I}$};
			\draw[->] (a)--(b);
			\draw[->] (b)--(d) node[midway,right]{$\mu_{X,I}$};
			\draw[->] (a)--(c) node[midway,left]{$\tau_{X,I}$};
			\draw[->] (c)--(d);
		\end{tikzpicture}
	\end{equation}
	\begin{equation}\label{t2}
		\begin{tikzpicture}[baseline=(current bounding box.center)]
			\node(a) at (0,2){$b_{J}(s)\cdot \mathbb{C}[s]$};
			\node(b) at (3,2){$\mathbb{C}[s]$};
			\node(c) at (0,0){$M_{Y,J}$};   
			\node(d) at (3,0){$A_{Y,J}$};
			\draw[->] (a)--(b);
			\draw[->] (b)--(d) node[midway,right]{$\mu_{Y,J}$};
			\draw[->] (a)--(c) node[midway,left]{$\tau_{Y,J}$};
			\draw[->] (c)--(d);
		\end{tikzpicture}
	\end{equation}
	Tensoring \eqref{t1} and \eqref{t2} yields a commutative diagram of $\mathbb{C}[s]$-modules.
	\begin{equation}\label{t3}
		\begin{tikzpicture}[baseline=(current bounding box.center)]
			\node(a) at (0,2){$b_{I}(s)b_{J}(s)\cdot \mathbb{C}[s]$};
			\node(b) at (4,2){$\mathbb{C}[s]$};
			\node(c) at (0,0){$M_{X,I}\otimes_{\mathbb{C}[s]}M_{Y,J}$};   
			\node(d) at (4,0){$A_{X,I}\otimes_{\mathbb{C}[s]} A_{Y,J}$};
			\draw[->] (a)--(b);
			\draw[->] (b)--(d) node[midway,right]{$\mu_{X,I}\otimes \mu_{Y,J}$};
			\draw[->] (a)--(c) node[midway,left]{$\tau_{X,I}\otimes\tau_{Y,J}$};
			\draw[->] (c)--(d);
		\end{tikzpicture}
	\end{equation}
	
	\par The product-type weak Bernstein-Sato polynomial is designed to capture $(M_{X,I}\otimes_{\mathbb{C}[s]} M_{Y,J})\cap \mathbb{C}[s]$. For convenience, we use the notation $M_{I,J}=M_{X,I}\otimes_{\mathbb{C}[s]} M_{Y,J}$.
	
	\begin{definition}\label{d3.1}
		The product-type weak Bernstein-Sato polynomial $b_{I,J}(s)$ is the unique monic polynomial of the smallest degree such that $b_{I,J}(|\bm{u}|)f_{1}^{u_{1}}\cdots f_{r_{1}}^{u_{r_{1}}}g_{1}^{v_{1}}\cdots g_{r_{2}}^{v_{r_{2}}}$ is in
		\begin{align*}  
	        \sum_{|\bm{\alpha}|=|\bm{\beta}|=1}\frac{\mathcal{D}_{X\times Y}[\bm{u},\bm{v}]}{(|\bm{u}|-|\bm{v}|)}\cdot \bigg( \prod_{\alpha_{i}<0}\binom{u_{i}}{-\alpha_{i}}\prod_{\beta_{j}<0}\binom{v_{j}}{-\beta_{j}}\bigg)f_{1}^{u_{1}+\alpha_{1}}\cdots f_{r_{1}}^{u_{r_{1}}+\alpha_{r_{1}}}g_{1}^{v_{1}+\beta_{1}}\cdots g_{r_{2}}^{v_{r_{2}}+\beta_{r_{2}}}.
		\end{align*}
	\end{definition}
	
	\par One can see that $b_{I,J}(s)\cdot \mathbb{C}[s]=M_{X,I}\otimes M_{Y,J}\cap \mathbb{C}[s]$ and the following commutative diagram is Cartesian.
	
	\begin{equation}\label{t4}
		\begin{tikzpicture}[baseline=(current bounding box.center)]
			\node(a) at (0,2){$b_{I,J}(s)\cdot \mathbb{C}[s]$};
			\node(b) at (4,2){$\mathbb{C}[s]$};
			\node(c) at (0,0){$M_{I,J}$};   
			\node(d) at (4,0){$A_{X,I}\otimes_{\mathbb{C}[s]} A_{Y,J}$};
			\draw[->] (a)--(b);
			\draw[->] (b)--(d) node[midway,right]{$\mu_{X,I}\otimes \mu_{Y,J}$};
			\draw[->] (a)--(c) node[midway,left]{$\tau_{X,I}\otimes\tau_{Y,J}$};
			\draw[->] (c)--(d);
		\end{tikzpicture}
	\end{equation}
	
	\subsection{Relations among $b_{I}(s)b_{J}(s)$, $b_{I,J}(s)$, and $b_{IJ}(s)$.}
	
	\par $\quad$
	
	\par We will prove $b_{IJ}(s)|b_{I,J}(s)$ and $b_{I,J}(s)=b_{I}(s)b_{J}(s)$ in this subsection.
	
	\par We show $b_{IJ}(s)|b_{I,J}(s)$ first. The Cartesian diagram corresponding to $b_{IJ}(s)$ is as follows.
	\begin{equation}\label{t5}
		\begin{tikzpicture}[baseline=(current bounding box.center)]
			\node(a) at (0,2){$b_{IJ}(s)\cdot \mathbb{C}[s]$};
			\node(b) at (4,2){$\mathbb{C}[s]$};
			\node(c) at (0,0){$M_{X\times Y,IJ}$};   
			\node(d) at (4,0){$A_{X\times Y,IJ}$};
			\draw[->] (a)--(b);
			\draw[->] (b)--(d) node[midway,right]{$\mu_{X\times Y,IJ}$};
			\draw[->] (a)--(c) node[midway,left]{$\tau_{X\times Y,IJ}$};
			\draw[->] (c)--(d);
		\end{tikzpicture}
	\end{equation}
	Let $R_{1}=\mathbb{C}[\bm{u}]$, $R_{2}=\mathbb{C}[\bm{v}]$, and $S=\mathbb{C}[\bm{w}]$. Let $R=R_{1}\otimes_{\mathbb{C}[s]}R_{2}=\mathbb{C}[\bm{u},\bm{v}]/(|\bm{u}|-|\bm{v}|)$. We embed $R_{1}$ and $R_{2}$ into $S$ by
	\begin{align*}
		u_{i}\mapsto \sum\limits_{j=1}^{r_{2}}w_{ij}, v_{j}\mapsto \sum\limits_{i=1}^{r_{1}}w_{ij}
	\end{align*}
	respectively, which induces a natural embedding $R\to S$. We have 
	\begin{center}
		$A_{X,I}\otimes_{\mathbb{C}[s]} A_{Y,J}=\mathcal{O}_{X\times Y}[\frac{1}{f_{1}\cdots f_{r_{1}}g_{1}\cdots g_{r_{2}}}]\otimes R$,\\
		$A_{X\times Y,IJ}=\mathcal{O}_{X\times Y}[\frac{1}{f_{1}\cdots f_{r_{1}}g_{1}\cdots g_{r_{2}}}]\otimes S$,
	\end{center}
	so the embedding $R\to S$ induces an embedding $T:A_{X,I}\otimes_{\mathbb{C}[s]} A_{Y,J}\to A_{X\times Y,IJ}$.
	
	\begin{lemma}\label{l3.2}
		We have $T(M_{I,J})\subset M_{X\times Y,IJ}$.
	\end{lemma}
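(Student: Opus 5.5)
We would prove Lemma \ref{l3.2} by moving the functional equation defining $M_{I,J}$ term by term into the functional equation defining $M_{X\times Y,IJ}$.

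\textbf{Transport of structure.} The substitution $u_{i}\mapsto\sum_{j}w_{ij}$, $v_{j}\mapsto\sum_{i}w_{ij}$ extends to a ring map
\[
\mathcal{D}_{X\times Y}[\bm{u},\bm{v}]/(|\bm{u}|-|\bm{v}|)\to\mathcal{D}_{X\times Y}[\bm{w}].
\]
Under it, $\prod_{i}f_{i}^{u_{i}+\alpha_{i}}\prod_{j}g_{j}^{v_{j}+\beta_{j}}$ corresponds to $\prod_{i,j}(f_{i}g_{j})^{w_{ij}+\gamma_{ij}}$ for any integer matrix $\gamma$ whose row sums are $\alpha_{i}$ and column sums are $\beta_{j}$. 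The $\mathcal{D}$-action commutes with this substitution, so $T$ is compatible with the actions on both sides. Hence if $Ff^{\bm{u}}g^{\bm{v}}=\sum P_{\alpha\beta}\cdot(\text{binomials})f^{\bm{u}+\alpha}g^{\bm{v}+\beta}$, it suffices to show the following. For each pair $(\alpha,\beta)$ with $|\alpha|=|\beta|=1$, the element
\[
E_{\alpha\beta}:=\prod_{\alpha_{i}<0}\binom{\sum_{j}w_{ij}}{-\alpha_{i}}\prod_{\beta_{j}<0}\binom{\sum_{i}w_{ij}}{-\beta_{j}}\prod_{i,j}(f_{i}g_{j})^{w_{ij}}\cdot f^{\alpha}g^{\beta}
\]
lies in the $\mathcal{D}_{X\times Y}[\bm{w}]$-module $N$ generated by the elements $\prod_{\gamma_{ij}<0}\binom{w_{ij}}{-\gamma_{ij}}\prod(f_{i}g_{j})^{w_{ij}+\gamma_{ij}}$ with $|\gamma|=1$.

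\textbf{Reduction to single binomials.} Expand each $\binom{\sum_{j}w_{ij}}{k}$ by Vandermonde's identity as $\sum_{k_{i1}+\dots+k_{ir_{2}}=k}\prod_{j}\binom{w_{ij}}{k_{ij}}$, and expand the column binomials likewise into $\prod_{i}\binom{w_{ij}}{l_{ij}}$. In each resulting term, the product $\binom{w_{ij}}{k_{ij}}\binom{w_{ij}}{l_{ij}}$ is a $\mathbb{Z}$-combination of $\binom{w_{ij}}{c}$ with $\max(k_{ij},l_{ij})\le c\le k_{ij}+l_{ij}$. So $E_{\alpha\beta}$ becomes a sum of terms $\prod_{i,j}\binom{w_{ij}}{c_{ij}}\cdot h$, where $h$ stands for the common function $\prod(f_{i}g_{j})^{w_{ij}}f^{\alpha}g^{\beta}$. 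In each term, row $i$ has $\sum_{j}c_{ij}\ge-\alpha_{i}$ when $\alpha_{i}<0$, and column $j$ has $\sum_{i}c_{ij}\ge-\beta_{j}$ when $\beta_{j}<0$.

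\textbf{Choosing $\gamma$.} For a fixed term, seek $\gamma=-c+p$ with $p\ge0$ an integer matrix with row sums $\alpha_{i}+\sum_{j}c_{ij}$ and column sums $\beta_{j}+\sum_{i}c_{ij}$. These margins are nonnegative: for $\alpha_{i}<0$ by the inequality above, and trivially when $\alpha_{i}\ge0$; the same holds for columns. Both margin totals equal $1+\sum c_{ij}$. By the transportation/Gale–Ryser-type existence statement (a nonnegative integer matrix with prescribed nonnegative margins of equal total exists, e.g.\ by the northwest-corner rule), such $p$ exists. Then $|\gamma|=1$, the monomial of $\gamma$ is exactly $h$, and $\gamma_{ij}\ge-c_{ij}$.

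Since $\binom{w}{c}=\binom{w}{c'}\cdot\frac{(w-c')\cdots(w-c+1)}{c(c-1)\cdots(c'+1)}$ for $c\ge c'$, each term $\prod\binom{w_{ij}}{c_{ij}}h$ is a $\mathbb{C}[\bm{w}]$-multiple of the generator $\prod_{\gamma_{ij}<0}\binom{w_{ij}}{-\gamma_{ij}}\prod(f_{i}g_{j})^{w_{ij}+\gamma_{ij}}$. Entries with $\gamma_{ij}\ge0$ contribute only polynomial prefactors. So every term lies in $N$, hence $E_{\alpha\beta}\in N$, and $T(F)\in M_{X\times Y,IJ}$.

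The main obstacle is this combinatorial step: matching the mixed row and column binomial factors with a single exponent matrix $\gamma$ of total $1$. The Vandermonde expansion followed by the transportation argument is the route I would try first, checking the nonnegativity of the margins carefully.
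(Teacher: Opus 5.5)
Your proposal is correct and follows essentially the same route as the paper. Both arguments expand the row and column binomials by Vandermonde into binomials in the $w_{ij}$, bound the resulting exponent of each $\binom{w_{ij}}{\cdot}$ from below, and then complete to an exponent matrix $\gamma$ with $|\gamma|=1$ by choosing a nonnegative integer matrix with prescribed margins. The only cosmetic differences are that the paper uses divisibility by $\binom{w_{ij}}{-\min\{c_{ij},d_{ij},0\}}$ where you expand $\binom{w}{k}\binom{w}{l}$ exactly, and that your appeal to the transportation (northwest-corner) existence result justifies a step the paper states without proof.
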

	\begin{proof}[Proof. ]
		Let $F\in M_{X,I}$ and $G\in M_{Y,J}$ satisfy 
		\begin{align*}
			Ff_{1}^{u_{1}}\cdots f_{r_{1}}^{u_{r_{1}}} & =P(\bm{u})\cdot \bigg( \prod_{\alpha_{i}<0}\binom{u_{i}}{-\alpha_{i}}\bigg)f_{1}^{u_{1}+\alpha_{1}}\cdots f_{r_{1}}^{u_{r_{1}}+\alpha_{r_{1}}},\\
			Gg_{1}^{v_{1}}\cdots g_{r_{2}}^{v_{r_{2}}} & =Q(\bm{v})\cdot \bigg( \prod_{\beta_{j}<0}\binom{v_{j}}{-\beta_{j}}\bigg)g_{1}^{v_{1}+\beta_{1}}\cdots g_{r_{2}}^{v_{r_{2}}+\beta_{r_{2}}}
		\end{align*}
		for some $\bm{\alpha}\in \mathbb{Z}^{r_{1}}$ and $\bm{\beta}\in \mathbb{Z}^{r_{2}}$ satisfying $|\bm{\alpha}|=|\bm{\beta}|=1$. It suffices to show that $T(F\otimes G)\in M_{X\times Y,IJ}$, that is, 
		\begin{align*}
			T(F\otimes G)\prod_{i,j}(f_{i}g_{j})^{w_{ij}}\in \sum_{\substack{\bm{\gamma}\in \mathbb{Z}^{r_{1}r_{2}}\\
			|\bm{\gamma}|=1}}\mathcal{D}_{X\times Y}[\bm{w}]\cdot \bigg( \prod_{\gamma_{ij}<0}\binom{w_{ij}}{-\gamma_{ij}}\bigg)\prod_{i,j}(f_{i}g_{j})^{w_{ij}+\gamma_{ij}}.
		\end{align*}
		The product $\prod\limits_{i,j}(f_{i}g_{j})^{w_{ij}}$ is equal to $f_{1}^{u_{1}}\cdots f_{r_{1}}^{u_{r_{1}}}g_{1}^{v_{1}}\cdots g_{r_{2}}^{v_{r_{2}}}$ if we regard $R_{1}$ and $R_{2}$ as subrings of $S$. It follows that
		\begin{align*}
			& T(F\otimes G)\prod_{i,j}(f_{i}g_{j})^{w_{ij}}\\
			= & FGf_{1}^{u_{1}}\cdots f_{r_{1}}^{u_{r_{1}}}g_{1}^{v_{1}}\cdots g_{r_{2}}^{v_{r_{2}}}\\
			= & P(\bm{u})Q(\bm{v})\cdot \bigg( \prod_{\alpha_{i}<0}\binom{u_{i}}{-\alpha_{i}} \prod_{\beta_{j}<0}\binom{v_{j}}{-\beta_{j}}\bigg)f_{1}^{u_{1}+\alpha_{1}}\cdots f_{r_{1}}^{u_{r_{1}}+\alpha_{r_{1}}}g_{1}^{v_{1}+\beta_{1}}\cdots g_{r_{2}}^{v_{r_{2}}+\beta_{r_{2}}}.
		\end{align*}
		\par For all $r,k\in \mathbb{Z}_{\ge 0}$, let $T_{r,k}=\{ (k_{1},\cdots ,k_{r})\in \mathbb{Z}^{r}_{\ge 0}|k_{1}+\cdots +k_{r}=k\}$. Using the identity
		\begin{align*}
			\binom{z_{1}+\cdots +z_{r}}{k}=\sum_{(k_{1},\cdots ,k_{r})\in T_{r,k}}\binom{z_{1}}{k_{1}}\cdots \binom{z_{r}}{k_{r}},
		\end{align*}
		we have
		\begin{align*}
			& \prod_{\alpha_{i}<0}\binom{u_{i}}{-\alpha_{i}} \prod_{\beta_{j}<0}\binom{v_{j}}{-\beta_{j}}\\
			= & \prod_{\alpha_{i}<0}\sum_{(c_{i1},\cdots ,c_{ir_{2}})\in T_{r_{2},-\alpha_{i}}}\binom{w_{i1}}{c_{i1}}\cdots \binom{w_{ir_{2}}}{c_{ir_{2}}} \prod_{\beta_{j}<0}\sum_{(d_{1j},\cdots ,d_{r_{1}j})\in T_{r_{1},-\beta_{j}}}\binom{w_{1j}}{d_{1j}}\cdots \binom{w_{r_{1}j}}{d_{r_{1}j}}\\
			= & \sum_{\bm{c}\in T^{1}_{\bm{\alpha}}}\prod\limits_{c_{ij}<0}\binom{w_{ij}}{-c_{ij}}\sum_{\bm{d}\in T^{2}_{\bm{\beta}}}\prod\limits_{d_{ij}<0}\binom{w_{ij}}{-d_{ij}},
		\end{align*}
		where the sets $T^{1}_{\bm{\alpha}}$ and $T^{2}_{\bm{\beta}}$ are 
		\begin{align*}
			& T^{1}_{\bm{\alpha}}=\{ \bm{c}\in \mathbb{Z}^{r_{1}r_{2}}\;|\; \text{$\alpha_{i}<0$ $\Rightarrow$ $c_{ij}\le 0$, $\alpha_{i}\ge 0$ $\Rightarrow$ $c_{ij}\ge 0$, and $\sum\limits_{k=1}^{r_{2}}c_{ik}=\alpha_{i}$ for all $i,j$}\},\\
			& T^{2}_{\bm{\beta}}=\{ \bm{d}\in \mathbb{Z}^{r_{1}r_{2}}\;|\; \text{$\beta_{j}<0$ $\Rightarrow$ $d_{ij}\le 0$, $\beta_{j}\ge 0$ $\Rightarrow$ $d_{ij}\ge 0$, and $\sum\limits_{k=1}^{r_{1}}d_{kj}=\beta_{j}$ for all $i,j$}\}.
		\end{align*}
		\par For all $a,b\in \mathbb{R}$, let $A(a,b)=\min \{ a,b,0\}$. Then we have
		\begin{align*}
			& \prod_{\alpha_{i}<0}\binom{u_{i}}{-\alpha_{i}}\prod_{\beta_{j}<0}\binom{v_{j}}{-\beta_{j}}\\
			= & \sum_{\bm{c}\in T^{1}_{\bm{\alpha}},\bm{d}\in T^{2}_{\bm{\beta}}}\prod\limits_{c_{ij}<0}\binom{w_{ij}}{-c_{ij}}\prod\limits_{d_{ij}<0}\binom{w_{ij}}{-d_{ij}}\\
			\in & \sum_{\bm{c}\in T^{1}_{\bm{\alpha}},\bm{d}\in T^{2}_{\bm{\beta}}}\prod\limits_{i,j}\binom{w_{ij}}{-A(c_{ij},d_{ij})}\cdot S.
		\end{align*}
		\par Let $A_{ij}=A(c_{ij},d_{ij})$. It remains to show that 
		\begin{align*}
			f_{1}^{\alpha_{1}}\cdots f_{r_{1}}^{\alpha_{r_{1}}}g_{1}^{\beta_{1}}\cdots g_{r_{2}}^{\beta_{r_{2}}}\bigg( \prod\limits_{i,j}\binom{w_{ij}}{-A_{ij}}\bigg)\prod\limits_{i,j}(f_{i}g_{j})^{w_{ij}}
		\end{align*}
		is in $\sum\limits_{|\bm{\gamma}|=1}D_{X\times Y}[\bm{w}]\big( \prod\limits_{\gamma_{ij}<0}\binom{w_{ij}}{-\gamma_{ij}}\big)\prod\limits_{i,j}(f_{i}g_{j})^{w_{ij}+\gamma_{ij}}$. For all $i,j$, one can see that
		\begin{align*}
			& \alpha_{i}-A_{i1}-\cdots -A_{ir_{2}}\ge \alpha_{i}-c_{i1}-\cdots -c_{ir_{2}}=0,\\
			& \beta_{j}-A_{1j}-\cdots -A_{r_{1}j}\ge \beta_{j}-d_{1j}-\cdots -d_{r_{1}j}\ge 0.
		\end{align*}
		Hence there exists $\bm{k}\in \mathbb{Z}_{\ge 0}^{r_{1}r_{2}}$ such that
		\begin{align*}
			\prod_{i=1}^{r_{1}}f_{i}^{\alpha_{i}-A_{i1}-\cdots -A_{ir_{2}}}\prod_{j=1}^{r_{2}}g_{j}^{\beta_{j}-A_{1j}-\cdots -A_{r_{1}j}}=\prod\limits_{i,j}(f_{i}g_{j})^{k_{ij}}.
		\end{align*}
		So we have 
		\begin{align*}
			& f_{1}^{\alpha_{1}}\cdots f_{r_{1}}^{\alpha_{r_{1}}}g_{1}^{\beta_{1}}\cdots g_{r_{2}}^{\beta_{r_{2}}}\bigg( \prod\limits_{i,j}\binom{w_{ij}}{-A_{ij}}\bigg)\prod\limits_{i,j}(f_{i}g_{j})^{w_{ij}}\\
			= & \prod_{i=1}^{r_{1}}f_{i}^{\alpha_{i}-A_{i1}-\cdots -A_{ir_{2}}}\prod_{j=1}^{r_{2}}g_{j}^{\beta_{j}-A_{1j}-\cdots -A_{r_{1}j}}
			\binom{w_{ij}}{-A_{ij}}\prod\limits_{i,j}(f_{i}g_{j})^{w_{ij}+A_{ij}}\\
			= & \prod\limits_{i,j}\binom{w_{ij}}{-A_{ij}}\prod\limits_{i,j}(f_{i}g_{j})^{w_{ij}+A_{ij}+k_{ij}}\\
			\in & \sum\limits_{|\bm{\gamma}|=1}D_{X\times Y}[\bm{w}]\bigg( \prod\limits_{\gamma_{ij}<0}\binom{w_{ij}}{-\gamma_{ij}}\bigg)\prod\limits_{i,j}(f_{i}g_{j})^{w_{ij}+\gamma_{ij}}
		\end{align*}
		since $\sum\limits_{i,j}(A_{ij}+k_{ij})=\frac{1}{2}(|\bm{\alpha}|+|\bm{\beta}|)=1$.
	\end{proof}
	
	\begin{prop}\label{p3.3}
		We have $b_{IJ}(s)|b_{I,J}(s)$.
	\end{prop}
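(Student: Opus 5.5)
The plan is to combine the Cartesian diagrams \eqref{t4} and \eqref{t5} through the embedding $T$ and Lemma \ref{l3.2}. By the discussion after Definition \ref{d3.1}, $b_{I,J}(s)\cdot\mathbb{C}[s]=M_{I,J}\cap\mathbb{C}[s]$, where $\mathbb{C}[s]$ sits inside $A_{X,I}\otimes_{\mathbb{C}[s]}A_{Y,J}$ via $\mu_{X,I}\otimes\mu_{Y,J}$. The element $b_{I,J}(s)$ is therefore the image of some element of $M_{I,J}$. Concretely, it is $b_{I,J}(|\bm{u}|)\otimes 1$, which in $R$ equals $b_{I,J}(|\bm{v}|)$ because $|\bm{u}|=|\bm{v}|$ there.

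The next step is to apply $T$. The embedding $R\to S$ sends $|\bm{u}|=\sum_i u_i$ to $\sum_{i,j}w_{ij}=|\bm{w}|$. Hence $T\circ(\mu_{X,I}\otimes\mu_{Y,J})=\mu_{X\times Y,IJ}$ as maps $\mathbb{C}[s]\to A_{X\times Y,IJ}$: the polynomial $p(s)$ is sent to $p(|\bm{w}|)$. By Lemma \ref{l3.2}, $T(M_{I,J})\subset M_{X\times Y,IJ}$. It follows that $\mu_{X\times Y,IJ}(b_{I,J}(s))=T\big((\mu_{X,I}\otimes\mu_{Y,J})(b_{I,J}(s))\big)$ lies in $M_{X\times Y,IJ}$.

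Now use the Cartesian diagram \eqref{t5}, which says $M_{X\times Y,IJ}\cap\mathbb{C}[s]=b_{IJ}(s)\cdot\mathbb{C}[s]$. Since $b_{I,J}(s)$ lies in this intersection, we get $b_{IJ}(s)\mid b_{I,J}(s)$.

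Unwound into functional equations, the argument is simply this. Take the defining equation of $b_{I,J}$ and substitute $u_i=\sum_j w_{ij}$ and $v_j=\sum_i w_{ij}$; this substitution is well defined because it kills $|\bm{u}|-|\bm{v}|$. Then rewrite each summand term by term exactly as in the proof of Lemma \ref{l3.2}, using the binomial identity $\binom{z_1+\cdots+z_r}{k}=\sum_{T_{r,k}}\binom{z_1}{k_1}\cdots\binom{z_r}{k_r}$ and the bookkeeping with $A_{ij}$.

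The only point needing care is checking that $T$ is compatible with the $\mathbb{C}[s]$-structures, i.e.\ that $T$ commutes with the two maps from $\mathbb{C}[s]$. This is immediate from $|\bm{u}|\mapsto|\bm{w}|$. So the main work has already been done in Lemma \ref{l3.2}, and the proposition is essentially formal.
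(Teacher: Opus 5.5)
Your proposal is correct and is essentially the paper's proof: the paper connects the Cartesian diagrams \eqref{t4} and \eqref{t5} by $T$ (using Lemma \ref{l3.2}) and $id_{\mathbb{C}[s]}$, then invokes the universal property of the pullback, which is exactly your element chase. Your explicit check that $T\circ(\mu_{X,I}\otimes\mu_{Y,J})=\mu_{X\times Y,IJ}$ (from $|\bm{u}|\mapsto|\bm{w}|$) supplies the commutativity of the connecting square that the paper leaves implicit.
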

	\begin{proof}[Proof. ]
		For the two Cartesian diagrams \eqref{t4} and \eqref{t5}, we have $\mathbb{C}[s]$-module morphisms
		\begin{align*}
			& T:A_{X,I}\otimes_{\mathbb{C}[s]} A_{Y,J}\to A_{X\times Y,IJ},\\
			& T:M_{X,I}\otimes_{\mathbb{C}[s]} M_{Y,J}\to M_{X\times Y,IJ},\\
			& id_{\mathbb{C}[s]}:\mathbb{C}[s]\to \mathbb{C}[s],
		\end{align*} 
		which are all injective. 
		\begin{figure}[h]
			\centering
			\begin{tikzpicture}
				\node(a) at (0,3){$b_{I,J}(s)\cdot \mathbb{C}[s]$};
				\node(b) at (5.3,3){$\mathbb{C}[s]$};
				\node(c) at (0,0){$M_{X,I}\otimes_{\mathbb{C}[s]}M_{Y,J}$};   
				\node(d) at (5.3,0){$A_{X,I}\otimes_{\mathbb{C}[s]} A_{Y,J}$};
				\draw[->] (a)--(b);
				\draw[->] (b)--(d);
				\draw[->] (a)--(c);
				\draw[->] (c)--(d);
				\node(a1) at (2.7,4){$b_{IJ}(s)\cdot \mathbb{C}[s]$};
				\node(b1) at (8,4){$\mathbb{C}[s]$};
				\node(c1) at (2.7,1){$M_{X\times Y,IJ}$};   
				\node(d1) at (8,1){$A_{X\times Y,IJ}$};
				\draw[->] (a1)--(b1);
				\draw[->] (b1)--(d1);
				\draw[->] (a1)--(c1);
				\draw[->] (c1)--(d1);
				\draw[->] (b)--(b1) node[midway,above]{$id$};
				\draw[->] (c)--(c1) node[midway,above]{$T$};
				\draw[->] (d)--(d1) node[midway,above]{$T$};
				\draw[dashed][->] (a)--(a1);
			\end{tikzpicture}
		\end{figure}	\FloatBarrier
		
		\par \noindent By the universal property of the Cartesian diagram, $b_{I,J}(s)\cdot \mathbb{C}[s]$ can be canonically embedded into $b_{IJ}(s)\cdot \mathbb{C}[s]$. Hence we have $b_{IJ}(s)|b_{I,J}(s)$.
	\end{proof}
	
	\par Our next goal is to show that $b_{I,J}(s)=b_{I}(s)b_{J}(s)$. As a preparation, we introduce filtrations on $M_{X,I}$ and $M_{Y,J}$. For all $p\in \mathbb{Z}$, let $F_{p}\mathcal{D}_{X}$ be the $\mathcal{O}_{X}$-module generated by all differential operators of order at most $p$. We define
	\begin{center}
		$N_{X,I,p}=\sum\limits_{\substack{|\bm{\alpha}|=1\\
		|\alpha_{1}|+\cdots +|\alpha_{r_{1}}|\le p}}F_{p}\mathcal{D}_{X}[\bm{u}]\cdot \bigg( \prod_{\alpha_{i}<0}\binom{u_{i}}{-\alpha_{i}}\bigg)f_{1}^{u_{1}+\alpha_{1}}\cdots f_{r_{1}}^{u_{r_{1}}+\alpha_{r_{1}}}$,\\
		$N_{Y,J,p}=\sum\limits_{\substack{|\bm{\beta}|=1\\
		|\beta_{1}|+\cdots +|\beta_{r_{2}}|\le p}}F_{p}\mathcal{D}_{Y}[\bm{v}]\cdot \bigg( \prod_{\beta_{j}<0}\binom{v_{j}}{-\beta_{j}}\bigg)g_{1}^{v_{1}+\beta_{1}}\cdots g_{r_{2}}^{v_{r_{2}}+\beta_{r_{2}}}$.
	\end{center}
	The filtrations on $M_{X,I}$ and $M_{Y,J}$ are defined as
	\begin{center}
		$F_{p}M_{X,I}=\{ F\in A_{X,I}|Ff_{1}^{u_{1}}\cdots f_{r_{1}}^{u_{r_{1}}}\in N_{X,I,p}\}$,\\
		$F_{p}M_{Y,J}=\{ G\in A_{Y,J}|Gg_{1}^{v_{1}}\cdots g_{r_{2}}^{v_{r_{2}}}\in N_{Y,J,p}\}$.
	\end{center}
	
	\begin{lemma}\label{l3.4}\textnormal{(Lemma 3.1 in \cite{c5})}
	    Let $A$ be a PID, and let $M$ and $N$ be free $A$-modules. Suppose $M_{1},M_{2}\subset M$ and $N_{1},N_{2}\subset N$ are submodules. Then the identity
	    \begin{align*}
	    	(M_{1}\cap M_{2})\otimes (N_{1}\cap N_{2})=\bigcap\limits_{i,j\in \{ 1,2\}}M_{i}\otimes N_{j}
	    \end{align*} 
	    holds.
	\end{lemma}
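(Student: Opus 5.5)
The plan is to reduce the statement to flatness. Over a PID every submodule of a free module is free, hence flat. This holds for free modules of arbitrary rank, which we need, since $A_{X,I}$ is free of infinite rank over $\mathbb{C}[s]$. So $M$, $N$, $M_{i}$, $N_{j}$, $M_{1}\cap M_{2}$ and $N_{1}\cap N_{2}$ are all flat $A$-modules.

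\textbf{Step 1: identify everything inside $M\otimes N$.} For submodules $M'\subset M$ and $N'\subset N$, the natural map $M'\otimes N'\to M\otimes N$ factors as
\begin{align*}
M'\otimes N'\to M'\otimes N\to M\otimes N.
\end{align*}
The first map is injective because $M'$ is flat, and the second because $N$ is flat. Hence all tensor products in the statement are submodules of $M\otimes N$, and the intersections make sense there.

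\textbf{Step 2: one-variable intersection formula.} I will show that for a flat module $P$ and submodules $N_{1},N_{2}\subset N$,
\begin{align*}
P\otimes (N_{1}\cap N_{2})=(P\otimes N_{1})\cap (P\otimes N_{2})\subset P\otimes N.
\end{align*}
Consider the exact sequence $0\to N_{1}\cap N_{2}\to N\to N/N_{1}\oplus N/N_{2}$ and tensor it with $P$; exactness is preserved since $P$ is flat. The same flatness shows that $P\otimes N_{j}$ is the kernel of $P\otimes N\to P\otimes N/N_{j}$. Therefore the kernel of $P\otimes N\to P\otimes N/N_{1}\oplus P\otimes N/N_{2}$ is exactly $(P\otimes N_{1})\cap(P\otimes N_{2})$, which gives the formula. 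The symmetric statement, with the flat module in the second factor and the intersection in the first, holds by the same argument.

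\textbf{Step 3: combine.} Apply Step 2 with $P=M_{1}\cap M_{2}$ to obtain
\begin{align*}
(M_{1}\cap M_{2})\otimes(N_{1}\cap N_{2})=\big((M_{1}\cap M_{2})\otimes N_{1}\big)\cap\big((M_{1}\cap M_{2})\otimes N_{2}\big).
\end{align*}
Then apply the symmetric version with the flat module $N_{j}$ to get
\begin{align*}
(M_{1}\cap M_{2})\otimes N_{j}=(M_{1}\otimes N_{j})\cap(M_{2}\otimes N_{j}).
\end{align*}
Substituting the second identity into the first yields $\bigcap_{i,j}M_{i}\otimes N_{j}$.

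The only delicate point is that these identities are first proved inside different ambient modules: $(M_{1}\cap M_{2})\otimes N$ in the first use and $M\otimes N_{j}$ in the second. By Step 1, all of these embed compatibly into $M\otimes N$, and intersections of submodules are preserved under a common injective map. So once Step 1 is in place, nothing further needs checking.
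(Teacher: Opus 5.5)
The paper does not prove this lemma; it quotes it from Shi--Zuo (Lemma 3.1 there), so there is no in-text argument to compare against. Your proof is correct and complete on its own.

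Step 1 makes explicit something the statement tacitly assumes: every $M_i\otimes N_j$ and $(M_1\cap M_2)\otimes(N_1\cap N_2)$ can be regarded as a submodule of $M\otimes N$. Step 2 is the standard fact that tensoring with a flat module commutes with finite intersections. Your transport argument at the end is also sound:
\begin{itemize}
\item the two images of $(M_1\cap M_2)\otimes N_j$ in $M\otimes N$, one via $(M_1\cap M_2)\otimes N$ and one via $M\otimes N_j$, coincide by functoriality;
\item the maps $(M_1\cap M_2)\otimes N\to M\otimes N$ and $M\otimes N_j\to M\otimes N$ are injective, by flatness of $N$ and of $M$ respectively, and injective maps preserve intersections.
\end{itemize}

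The argument only uses flatness of $M$, $N$, $N_1$, $N_2$ and $M_1\cap M_2$. So over a PID it is enough that $M$ and $N$ be torsion-free. This also lets you avoid the transfinite theorem that submodules of free modules of infinite rank are free. Torsion-free modules over a PID are flat for elementary reasons: they are direct limits of finitely generated free modules.

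Either version covers the paper's application, where $M=A_{X,I}$ and $N=A_{Y,J}$ are free of infinite rank over $\mathbb{C}[s]$.
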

	
	\begin{prop}\label{p3.5}
		The equality $b_{I,J}(s)=b_{I}(s)b_{J}(s)$ holds.
	\end{prop}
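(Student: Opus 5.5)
The plan is to compare the Cartesian squares \eqref{t3} and \eqref{t4}. The divisibility $b_{I,J}(s)\mid b_I(s)b_J(s)$ is the easy half. For the reverse divisibility, I will use Lemma \ref{l3.4} with $A=\mathbb{C}[s]$ to show that $M_{I,J}\cap\mathbb{C}[s]\subset b_I(s)b_J(s)\cdot\mathbb{C}[s]$.

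\emph{Step 1 (easy divisibility).} Take functional equations for $b_I(|\bm u|)$ and $b_J(|\bm v|)$, with operators $P_{\bm\alpha}(\bm u)\in\mathcal{D}_X[\bm u]$ and $Q_{\bm\beta}(\bm v)\in\mathcal{D}_Y[\bm v]$. Since these operators act on disjoint sets of variables, multiplying the two equations gives
\begin{align*}
b_I(|\bm u|)b_J(|\bm v|)\prod_i f_i^{u_i}\prod_j g_j^{v_j}=\sum_{\bm\alpha,\bm\beta}P_{\bm\alpha}(\bm u)Q_{\bm\beta}(\bm v)\cdot\Big(\prod_{\alpha_i<0}\binom{u_i}{-\alpha_i}\prod_{\beta_j<0}\binom{v_j}{-\beta_j}\Big)\prod_i f_i^{u_i+\alpha_i}\prod_j g_j^{v_j+\beta_j}.
\end{align*}
Modulo $|\bm u|-|\bm v|$ we have $b_J(|\bm v|)=b_J(|\bm u|)$, so $b_I(s)b_J(s)\in M_{I,J}\cap\mathbb{C}[s]$, which gives $b_{I,J}(s)\mid b_I(s)b_J(s)$. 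Equivalently, this is the commutativity of \eqref{t3} combined with the Cartesian property of \eqref{t4}.

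\emph{Step 2 (setting up Lemma \ref{l3.4}).} Put $M=A_{X,I}$ and $N=A_{Y,J}$. Here $M=\mathcal{O}_X[\frac{1}{f_1\cdots f_{r_1}}]\otimes\mathbb{C}[\bm u]$ is free over $\mathbb{C}[\bm u]$, and $\mathbb{C}[\bm u]$ is free over $\mathbb{C}[s]$ via $s\mapsto|\bm u|$ (change variables so that $|\bm u|$ is a coordinate). Hence $M$ is a free $\mathbb{C}[s]$-module, and likewise $N$. Take
\begin{align*}
M_1=M_{X,I},\quad M_2=\mu_{X,I}(\mathbb{C}[s]),\qquad N_1=M_{Y,J},\quad N_2=\mu_{Y,J}(\mathbb{C}[s]).
\end{align*}
All of these are torsion-free over the PID $\mathbb{C}[s]$, hence flat, so every tensor product of submodules embeds into $M\otimes_{\mathbb{C}[s]}N$ and all intersections can be computed there. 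In particular the bottom arrow of \eqref{t4} is injective.

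\emph{Step 3 (the key intersection argument).} Let $x\in M_{I,J}\cap\mathbb{C}[s]=(M_1\otimes N_1)\cap(M_2\otimes N_2)$. I want to show that $x$ lies in all four modules $M_i\otimes N_j$.
\begin{itemize}
\item Apply Lemma \ref{l3.4} to the pairs $(M_1,M)$ and $(N,N_2)$. It gives $M_1\otimes N_2=(M_1\otimes N)\cap(M\otimes N_2)$, since the other two terms in the intersection are redundant.
\item Since $x\in M_1\otimes N_1\subset M_1\otimes N$ and $x\in M_2\otimes N_2\subset M\otimes N_2$, we get $x\in M_1\otimes N_2$.
\item Symmetrically, $x\in M_2\otimes N_1$.
\end{itemize}
Now apply Lemma \ref{l3.4} to the original four submodules:
\begin{align*}
x\in\bigcap_{i,j}M_i\otimes N_j=(M_1\cap M_2)\otimes(N_1\cap N_2)=b_I(s)\mathbb{C}[s]\otimes b_J(s)\mathbb{C}[s]=b_I(s)b_J(s)\cdot\mathbb{C}[s].
\end{align*}
The equalities $M_1\cap M_2=b_I(s)\mathbb{C}[s]$ and $N_1\cap N_2=b_J(s)\mathbb{C}[s]$ are the Cartesian squares \eqref{t1} and \eqref{t2}. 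Taking $x=b_{I,J}(s)$ gives $b_I(s)b_J(s)\mid b_{I,J}(s)$, and together with Step 1 this proves the proposition.

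\emph{Main obstacle.} The delicate point is checking the hypotheses of Lemma \ref{l3.4} in this infinitely generated setting, namely that the ambient modules are free over $\mathbb{C}[s]$ and that the tensor products of submodules really are their images in $M\otimes N$. If the lemma as stated is only safe for finitely generated modules, I would fall back on the filtrations just introduced. Each $F_pM_{X,I}$ can be intersected with a finitely generated free piece of $A_{X,I}$, namely bounded pole order along $f_1\cdots f_{r_1}$ and bounded degree in $\bm u$. The argument would then be run on these pieces, using that $M_{X,I}=\bigcup_pF_pM_{X,I}$ and that tensor products and intersections commute with directed unions.
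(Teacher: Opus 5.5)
Your overall plan is the paper's: identify $b_I(s)b_J(s)\cdot\mathbb{C}[s]$ with $\bigcap_{i,j}M_i\otimes N_j$ via Lemma \ref{l3.4}, then show that an element of $(M_1\otimes N_1)\cap(M_2\otimes N_2)$ also lies in the cross terms $M_1\otimes N_2$ and $M_2\otimes N_1$. That cross-term containment is the whole nontrivial content of the proposition; the paper's coefficient-comparison argument exists only to prove it. Your justification of it fails.

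\textbf{The gap.} Apply Lemma \ref{l3.4} to $\{M_1,M\}$ and $\{N,N_2\}$. It says $(M_1\cap M)\otimes(N\cap N_2)=(M_1\otimes N)\cap(M_1\otimes N_2)\cap(M\otimes N)\cap(M\otimes N_2)$. On the right, only $M\otimes N$ is redundant. The term $M_1\otimes N_2$ is contained in the other three, so the lemma reduces to the tautology $M_1\otimes N_2=M_1\otimes N_2$. You discarded exactly the term that carries the content.

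The identity $(M_1\otimes N)\cap(M\otimes N_2)=M_1\otimes N_2$ does not follow formally from Lemma \ref{l3.4}. Take $M=N=\mathbb{C}[s]$ and $M_1=N_2=s\,\mathbb{C}[s]$. Lemma \ref{l3.4} holds, yet the left side is $s\,\mathbb{C}[s]$ and the right side is $s^2\,\mathbb{C}[s]$. This is not an artificial worry here: $M_{X,I}$ is not pure in $A_{X,I}$, since $b_I(s)\cdot 1\in M_{X,I}$ while $1\notin M_{X,I}$ unless $b_I=1$.

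\textbf{The missing ingredient} is that $N_2=\mu_{Y,J}(\mathbb{C}[s])$ is a direct summand of $N=A_{Y,J}$.
\begin{itemize}
\item With $s=|\bm v|$ we have $\mathbb{C}[\bm v]=\mathbb{C}[s][v_2,\dots,v_{r_2}]$, and $1$ extends to a $\mathbb{C}$-basis of $\mathcal{O}_Y[\frac{1}{g_1\cdots g_{r_2}}]$. So $1$ is part of a $\mathbb{C}[s]$-basis of $A_{Y,J}$.
\item Hence there is a $\mathbb{C}[s]$-linear map $\phi:A_{Y,J}\to\mathbb{C}[s]$ with $\phi(1)=1$.
\item Let $x\in(M_1\otimes N)\cap(M\otimes N_2)$ and write $x=m\otimes 1$. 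Then $m=(\mathrm{id}\otimes\phi)(x)$. Since $x$ comes from $M_1\otimes N$, $m$ lies in the image of $M_1\otimes_{\mathbb{C}[s]}\mathbb{C}[s]=M_1$. So $x\in M_1\otimes N_2$.
\item The same argument with $\mathbb{C}[\bm u]$ gives the other cross term.
\end{itemize}

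With this added, your proof is complete. It is also cleaner than the paper's route, which passes to filtration pieces, fixes a monomial $\bm y^{\bm\alpha}$ of $g_1\cdots g_{r_2}$, compares coefficients, and applies the projection $\pi:S\to\mathbb{C}[\bm u]$, in effect building such a retraction by hand.

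The obstacle you flagged is not the real one. Lemma \ref{l3.4} is stated for arbitrary free modules over a PID, so no reduction to finitely generated filtration pieces is needed.
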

	\begin{proof}[Proof. ]
		The divisibility $b_{I,J}(s)|b_{I}(s)b_{J}(s)$ is trivial. To prove the divisibility $b_{I}(s)b_{J}(s)|b_{I,J}(s)$, it suffices to show that \eqref{t3} is Cartesian. The $\mathbb{C}[s]$-modules $A_{X,I}$ and $A_{Y,J}$ are free. By Lemma \ref{l3.4}, we have $b_{I}(s)b_{J}(s)\cdot \mathbb{C}[s]=M_{1}\cap M_{2}\cap M_{3}\cap M_{4}$, where
		\begin{center}
			$M_{1}=M_{X,I}\otimes_{\mathbb{C}[s]} M_{Y,J}$,\\
			$M_{2}=M_{X,I}\otimes_{\mathbb{C}[s]} \mathbb{C}[s]$,\\
			$M_{3}=\mathbb{C}[s]\otimes_{\mathbb{C}[s]} M_{Y,J}$,\\
			$M_{4}=\mathbb{C}[s]\otimes_{\mathbb{C}[s]} \mathbb{C}[s]$.
		\end{center}
		The $\mathbb{C}[s]$-module $M_{I,J}\cap \mathbb{C}[s]$ equals $M_{1}\cap M_{4}$. Hence it remains to show that $M_{1}\cap M_{4}\subset M_{2}$ and $M_{1}\cap M_{4}\subset M_{3}$. We only prove $M_{1}\cap M_{4}\subset M_{2}$ since $M_{1}\cap M_{4}\subset M_{3}$ can be proved by a similar argument. 
		\par The $\mathbb{C}[s]$-module morphism $T:A_{X,I}\otimes A_{Y,J}\to A_{X\times Y,IJ}$ is injective, so it suffices to show $T(M_{1}\cap M_{4})\subset T(M_{2})$. Using the filtration, we only need to prove
		\begin{align*}
			T(F_{r_{2}p}M_{X,I}\otimes F_{r_{1}p}M_{Y,J}\cap \mathbb{C}[s])\subset T(F_{r_{2}p}M_{X,I})\otimes \mathbb{C}[s]
		\end{align*}
		for all $p\in \mathbb{Z}_{\ge 0}$. Let $B\in F_{r_{2}p}M_{X,I}\otimes F_{r_{1}p}M_{Y,J}$. We write $B$ as
		\begin{align*}
			B=\sum\limits_{i=1}^{c}F_{i}(\bm{u})f_{1}^{-2r_{2}p}\cdots f_{r_{1}}^{-2r_{2}p}\otimes G_{i}(\bm{v})g_{1}^{-2r_{1}p}\cdots g_{r_{2}}^{-2r_{1}p},
		\end{align*}
		where $F_{i}(\bm{u})\in \mathcal{O}_{X}[\bm{u}]$ and $G_{i}(\bm{v})\in \mathcal{O}_{Y}[\bm{v}]$. Suppose $B=c(|\bm{u}|)\in \mathbb{C}[\bm{u}]$ for some polynomial $c(s)\in \mathbb{C}[s]$. Then we have 
		\begin{align*}
			\sum\limits_{i=1}^{c}F_{i}(\bm{u})\otimes G_{i}(\bm{v})=c(|\bm{w}|)f_{1}^{2r_{2}p}\cdots f_{r_{1}}^{2r_{2}p}g_{1}^{2r_{1}p}\cdots g_{r_{2}}^{2r_{1}p}.
		\end{align*}
		It remains to show $c(|\bm{w}|)\in T(F_{r_{2}p}M_{X,I})$ since $T(B)=c(|\bm{w}|)\otimes 1$ and $1\in \mathbb{C}[s]$.
		\par By Proposition \ref{p2.5}, we can suppose that $g_{1},\cdots ,g_{r_{2}}\in \mathfrak{m}=(y_{1},\cdots ,y_{n})$ without loss of generality. There exists $k\in \mathbb{Z}_{\ge 0}$ such that $g_{1}\cdots g_{r_{2}}\in \mathfrak{m}^{k}\setminus \mathfrak{m}^{k+1}$. The monomials form a $\mathbb{C}$-linear basis of $\mathbb{C}[\bm{y}]$. Hence one can choose $\bm{\alpha}\in \mathbb{Z}_{\ge 0}^{n}$ with $|\bm{\alpha}|=k$ such that the coefficient of $\bm{y}^{\bm{\alpha}}$ in $g_{1}\cdots g_{r_{2}}$ is non-zero. We can suppose this coefficient is $1$ without loss of generality. The monomials also form a $\mathbb{C}[\bm{v}]$-linear basis of $\mathcal{O}_{Y}[\bm{v}]$. For all $i$, writing $G_{i}(\bm{v})$ in the monomial basis, we suppose the coefficient of $\bm{y}^{2r_{1}p\bm{\alpha}}$ in $G_{i}(\bm{v})$ is $a_{i}(\bm{v})\in S=\mathbb{C}[\bm{w}]$. Then we have 
		\begin{equation}\label{e6}
			\sum\limits_{i=1}^{c}a_{i}(\bm{v})F_{i}(\bm{u}) =c(|\bm{w}|)f_{1}^{2r_{2}p}\cdots f_{r_{1}}^{2r_{2}p}
		\end{equation}
		by comparing the coefficient of $\bm{y}^{2r_{1}p\alpha}$. The polynomial ring $S$ is equal to
		\begin{align*}
			\mathbb{C}[\bm{u}][w_{12},\cdots ,w_{1r_{2}},\cdots ,w_{r_{1}2},\cdots ,w_{r_{1}r_{2}}],
		\end{align*}
		so $a_{i}(\bm{v})$ can be written in the form
		\begin{align*}
			a_{i}(\bm{v})=a_{i}(u_{1}-w_{12}-\cdots -w_{1r_{2}}+\cdots +u_{r_{1}}-w_{r_{1}2}-\cdots -w_{r_{1}r_{2}},v_{2},\cdots ,v_{r_{2}}).
		\end{align*}
		Let $\pi:S\to \mathbb{C}[\bm{u}]$ be the natural projection induced by taking $w_{12}=\cdots =w_{1r_{2}}=\cdots =w_{r_{1}2}=\cdots =w_{r_{1}r_{2}}=0$ with the identity $S=\mathbb{C}[\bm{u}][w_{12},\cdots ,w_{1r_{2}},\cdots ,w_{r_{1}2},\cdots ,w_{r_{1}r_{2}}]$. We have $\pi(a_{i}(\bm{v}))=a_{i}(|\bm{u}|,0,\cdots ,0)$ and $\pi(c(|\bm{w}|))=c(|\bm{w}|)$. By the equality \eqref{e6}, we have
		\begin{align*}
			\sum\limits_{i=1}^{c}a_{i}(|\bm{u}|,0,\cdots ,0)F_{i}(\bm{u}) =c(|\bm{w}|)f_{1}^{2r_{2}p}\cdots f_{r_{1}}^{2r_{2}p},
		\end{align*}
		so $c(|\bm{w}|)\in T(F_{r_{2}p}M_{X,I})$.
	\end{proof}

	\section{Proof of $b_{IJ}(s)=b_{I,J}(s)$}\label{s4}
	
	\par We adopt the notation from the previous sections. We have proved $b_{I,J}(s)=b_{I}(s)b_{J}(s)$ and $b_{IJ}(s)|b_{I,J}(s)$. It remains to show $b_{I,J}(s)|b_{IJ}(s)$ for the proof of Theorem \ref{t1.1}. We have two Cartesian diagrams \eqref{t4} and $\eqref{t5}$. The natural idea is to construct a $\mathbb{C}[s]$-module morphism from $M_{X\times Y,IJ}$ to $M_{I,J}$, as in the proof of Proposition \ref{p3.3}. The essential step is to construct a suitable $\mathbb{C}[s]$-module morphism from $S$ to $R$. Unfortunately, such a morphism is difficult to construct directly. However, we can construct an $R$-module morphism $\sigma:S\to L^{-1}R$ that suffices for our purpose, where $L\subset \mathbb{C}[s]$ is the multiplicatively closed subset generated by $\{ s-N|N\in \mathbb{Z}_{\ge 0}\}$. We will use this $\mathbb{C}[s]$-module morphism to prove $b_{I,J}(s)|b_{IJ}(s)$ in this section.
	
	\par To construct $\sigma:S\to L^{-1}R$, we introduce some new notation. For all $a\in \mathbb{Z}_{\ge 0}$, let $(z)^{\underline{a}}=a!\binom{z}{a}$, where $z$ is an indeterminate. In particular, $(z)^{\underline{0}}=1$. For a matrix-valued multi-index $K=(k_{ij})\in \mathbb{Z}_{\ge 0}^{r_{1}r_{2}}$, we introduce the notations
	\begin{center}
		$k_{i-}=k_{i1}+\cdots +k_{ir_{2}}$,\\
		$k_{-j}=k_{1j}+\cdots +k_{r_{1}j}$,\\
		$|K|=\sum_{i,j}k_{ij}$,\\
		$F_{K}(\bm{w})=\prod_{i,j}(w_{ij})^{\underline{k_{ij}}}\in S$.
	\end{center} 
	Intuitively, these notations can be described as follows.
	\begin{align*}
		\begin{matrix}
			k_{11} & k_{12} & \cdots & k_{1r_{2}} & \to & k_{1-}\\
			k_{21} & k_{22} & \cdots & k_{2r_{2}} & \to & k_{2-}\\
			\vdots & \vdots & \ddots & \vdots & & \vdots \\
			k_{r_{1}1} & k_{r_{1}2} & \cdots & k_{r_{1}r_{2}} & \to & k_{r_{1}-}\\
			\downarrow & \downarrow & & \downarrow & & \\
			k_{-1} & k_{-2} & \cdots & k_{-r_{2}} & & 
		\end{matrix}
	\end{align*}
	The set $\{ F_{K}(\bm{w})|K\in \mathbb{Z}_{\ge 0}^{r_{1}r_{2}}\}$ is a $\mathbb{C}$-linear basis of $S$. We define
	\begin{align*}
		\sigma:S\to L^{-1}R, F_{K}(\bm{w})\mapsto \frac{\prod\limits_{i=1}^{r_{1}}(u_{i})^{\underline{k_{i-}}}\prod\limits_{j=1}^{r_{2}}(v_{j})^{\underline{k_{-j}}}}{(s)^{\underline{|K|}}}.
	\end{align*}
	
	\begin{lemma}\label{l4.1}
		The map $\sigma$ is an $R$-module morphism and $\sigma|_{R}=id_{R}$.
	\end{lemma}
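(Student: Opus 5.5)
The plan is to check $R$-linearity on the generators $u_{i}$ and $v_{j}$ of $R$ against the basis $\{F_{K}(\bm{w})\}$, and then deduce $\sigma|_{R}=id_{R}$ from $\sigma(1)=1$.

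\emph{Preliminaries.} Since $|\bm{u}|-|\bm{v}|$ is a nonzero linear form, $R=\mathbb{C}[\bm{u},\bm{v}]/(|\bm{u}|-|\bm{v}|)$ is a polynomial ring, hence a domain. In $R$ the element $s$ equals $|\bm{u}|=|\bm{v}|$, so each $s-N$ is nonzero and $L^{-1}R$ is a well-defined $R$-module containing $R$. The $R$-module structure on $S$ comes from the embedding $R\to S$, $u_{i}\mapsto \sum_{j}w_{ij}$, $v_{j}\mapsto \sum_{i}w_{ij}$. Under this embedding $s\mapsto |\bm{w}|$. The map $\sigma$ is defined $\mathbb{C}$-linearly on the basis $F_{K}(\bm{w})$. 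Because $R$ is generated as a $\mathbb{C}$-algebra by the $u_{i}$ and $v_{j}$, it suffices to prove
\begin{align*}
\sigma(u_{i}F_{K})=u_{i}\sigma(F_{K}),\qquad \sigma(v_{j}F_{K})=v_{j}\sigma(F_{K})
\end{align*}
for all $K$, $i$, $j$. Linearity for products of generators then follows by iterating.

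\emph{Key identity.} I will use $z\cdot(z)^{\underline{k}}=(z)^{\underline{k+1}}+k\,(z)^{\underline{k}}$, where $E_{ij}$ denotes the matrix unit. This gives
\begin{align*}
u_{i}F_{K}=\sum_{j=1}^{r_{2}}w_{ij}F_{K}=\sum_{j=1}^{r_{2}}\big(F_{K+E_{ij}}+k_{ij}F_{K}\big),
\end{align*}
and therefore
\begin{align*}
\sigma(u_{i}F_{K})=\sum_{j}\sigma(F_{K+E_{ij}})+k_{i-}\sigma(F_{K}).
\end{align*}
Passing from $K$ to $K+E_{ij}$ has three effects in the formula for $\sigma$. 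It raises $k_{i-}$ and $k_{-j}$ by one and $|K|$ by one. It multiplies the numerator by $(u_{i}-k_{i-})(v_{j}-k_{-j})$. It multiplies the denominator by $s-|K|$, since $(s)^{\underline{|K|+1}}=(s)^{\underline{|K|}}(s-|K|)$. Hence
\begin{align*}
\sum_{j}\sigma(F_{K+E_{ij}})=\sigma(F_{K})\,\frac{(u_{i}-k_{i-})\sum_{j}(v_{j}-k_{-j})}{s-|K|}=\sigma(F_{K})(u_{i}-k_{i-}),
\end{align*}
because $\sum_{j}(v_{j}-k_{-j})=|\bm{v}|-|K|=s-|K|$ in $R$. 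Adding $k_{i-}\sigma(F_{K})$ yields $u_{i}\sigma(F_{K})$. The computation for $v_{j}$ is symmetric: sum over $i$ instead, and use $|\bm{u}|=s$.

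\emph{Conclusion.} With $R$-linearity in hand, note $\sigma(1)=\sigma(F_{0})=1$. So for $r\in R$ we get $\sigma(r)=\sigma(r\cdot 1)=r\,\sigma(1)=r$, i.e.\ $\sigma|_{R}=id_{R}$. The only real obstacle is the telescoping in the key identity: the cancellation of $s-|K|$ works only because $|\bm{u}|=|\bm{v}|=s$ holds in $R$. This is exactly why the target must be $L^{-1}R$ and not $L^{-1}\mathbb{C}[\bm{u},\bm{v}]$.
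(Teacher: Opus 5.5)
Your proposal is correct and follows essentially the same route as the paper. Both arguments check $\sigma(u_iF_K)=u_i\sigma(F_K)$ and $\sigma(v_jF_K)=v_j\sigma(F_K)$ via $z(z)^{\underline{k}}=(z)^{\underline{k+1}}+k(z)^{\underline{k}}$, cancel the new factor $s-|K|$ using $\sum_j(v_j-k_{-j})=s-|K|$ in $R$, and deduce $\sigma|_{R}=id_{R}$ from $\sigma(F_{O})=1$; your remarks on $R$ being a domain and on where $|\bm{u}|=|\bm{v}|$ is used only make explicit what the paper leaves implicit.
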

	\begin{proof}[Proof. ]
		The zero matrix $O$ corresponds to $F_{O}=1$, so the last claim follows from the first one. It suffices to show that $u_{l}\sigma(F_{K}(\bm{w}))=\sigma(u_{l}F_{K}(\bm{w}))$ and $v_{l^{\prime}}\sigma(F_{K}(\bm{w}))=\sigma(v_{l^{\prime}}F_{K}(\bm{w}))$ for all $K\in \mathbb{Z}_{\ge 0}^{r_{1}r_{2}}$, $1\le l\le r_{1}$, and $1\le l^{\prime}\le r_{2}$. We only prove $u_{l}\sigma(F_{K}(\bm{w}))=\sigma(u_{l}F_{K}(\bm{w}))$, since the other equality follows by a similar argument. Using the identity $z(z)^{\underline{k}}=(z)^{\underline{k+1}}+k(z)^{\underline{k}}$, one can see that 
		\begin{align*}
			u_{l}F_{K}(\bm{w}) & =w_{l1}\prod_{i,j}(w_{ij})^{\underline{k_{ij}}}+\cdots +w_{lr_{2}}\prod_{i,j}(w_{ij})^{\underline{k_{ij}}}\\
			& =F_{K+E_{l1}}(\bm{w})+k_{l1}F_{K}(\bm{w})\cdots +F_{K+E_{lr_{2}}}(\bm{w})+k_{lr_{2}}F_{K}(\bm{w})\\
			& =F_{K+E_{l1}}(\bm{w})+\cdots +F_{K+E_{lr_{2}}}(\bm{w})+k_{l-}F_{K}(\bm{w}),
		\end{align*}
		where $E_{ij}$ is the matrix whose $(i,j)$-entry is $1$ and all other entries are $0$. Hence $\sigma(u_{l}F_{K}(\bm{w}))=\sigma(F_{K+E_{l1}}(\bm{w}))+\cdots +\sigma(F_{K+E_{lr_{2}}}(\bm{w}))+k_{l-}\sigma(F_{K}(\bm{w}))$. By definition of $\sigma$, we have 
		\begin{align*}
			\sigma(F_{K+E_{l_{1}l_{2}}}(\bm{w}))=\sigma(F_{K}(\bm{w}))\frac{(u_{l_{1}}-k_{l_{1}-})(v_{l_{2}}-k_{-l_{2}})}{s-|K|}.
		\end{align*}
		It follows that $\sigma(u_{l}F_{K}(\bm{w}))=\bigg(k_{l-}+\frac{\sum\limits_{j=1}^{r_{2}}(u_{l}-k_{l-})(v_{j}-k_{-j})}{s-|K|}\bigg)\sigma(F_{K}(\bm{w}))=u_{l}\sigma(F_{K}(\bm{w}))$.
	\end{proof}
	
	\par Recall that we can write $A_{X,I}\otimes_{\mathbb{C}[s]} A_{Y,J}$ and $A_{X\times Y,IJ}$ as 
	\begin{center}
		$A_{X,I}\otimes_{\mathbb{C}[s]} A_{Y,J}=\mathcal{O}_{X\times Y}[\frac{1}{f_{1}\cdots f_{r_{1}}g_{1}\cdots g_{r_{2}}}]\otimes R$,\\
		$A_{X\times Y,IJ}=\mathcal{O}_{X\times Y}[\frac{1}{f_{1}\cdots f_{r_{1}}g_{1}\cdots g_{r_{2}}}]\otimes S$,
	\end{center}
	so $\sigma$ induces an $R$-module morphism $\lambda:A_{X\times Y,IJ}\to L^{-1}A_{X,I}\otimes_{\mathbb{C}[s]} A_{Y,J}$. Our next goal is to restrict $\lambda$ to $M_{X\times Y,IJ}$.
	
	\begin{lemma}\label{l4.2}
		Let $a,b\in \mathbb{Z}_{\ge 0}$. Then the equality
		\begin{align*}
			(z)^{\underline{a}}(z)^{\underline{b}}=\sum\limits_{k=0}^{\min\{ a,b\}}\binom{a}{k}\binom{b}{k}k!(z)^{\underline{a+b-k}}
		\end{align*}
		holds.
	\end{lemma}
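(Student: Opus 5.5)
This is the classical product formula for falling factorials. Since both sides are polynomials in $z$, it suffices to prove the identity for $z=N\in\mathbb{Z}_{\ge 0}$, and we argue combinatorially. For a nonnegative integer $N$, $(N)^{\underline{a}}$ counts injective maps $\{1,\dots,a\}\to\{1,\dots,N\}$. Hence $(N)^{\underline{a}}(N)^{\underline{b}}$ counts pairs $(\phi,\psi)$ with $\phi:[a]\hookrightarrow[N]$ and $\psi:[b]\hookrightarrow[N]$.

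We classify these pairs by the overlap $C=\phi([a])\cap\psi([b])$, of size $k$ with $0\le k\le\min\{a,b\}$. Fix such a $k$. The pair $(\phi,\psi)$ is then determined by the following data:
\begin{itemize}
\item a $k$-subset $A'\subset[a]$, namely $\phi^{-1}(C)$, chosen in $\binom{a}{k}$ ways;
\item a $k$-subset $B'\subset[b]$, namely $\psi^{-1}(C)$, chosen in $\binom{b}{k}$ ways;
\item a bijection $A'\to B'$, namely $\psi^{-1}\circ\phi$ restricted to $A'$, chosen in $k!$ ways;
\item an injective map from the set $[a]\sqcup([b]\setminus B')$, which has $a+b-k$ elements, into $[N]$, giving $(N)^{\underline{a+b-k}}$ choices.
\end{itemize}
The last injection encodes $\phi$ on all of $[a]$ together with $\psi$ on $[b]\setminus B'$. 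Then $\psi$ on $B'$ is recovered through the chosen bijection.

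It remains to check that this correspondence is bijective. The images of $[a]$ and of $[b]\setminus B'$ must be disjoint, since otherwise the overlap would exceed $C$. That is exactly what injectivity of the combined map enforces. Conversely, any such data reconstructs a pair $(\phi,\psi)$ whose overlap has size exactly $k$.

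Summing over $k$ gives the identity at every $N\in\mathbb{Z}_{\ge 0}$. Two polynomials agreeing at infinitely many points are equal, so the identity holds in $\mathbb{Q}[z]$, and hence for an indeterminate $z$.

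An alternative route is induction on $b$. One uses $(z)^{\underline{b+1}}=(z)^{\underline{b}}(z-b)$ together with $(z)^{\underline{m}}(z-b)=(z)^{\underline{m+1}}+(m-b)(z)^{\underline{m}}$, and then verifies the Pascal-type recurrence for the coefficients $\binom{a}{k}\binom{b}{k}k!$. The combinatorial argument avoids this bookkeeping. Its only delicate point is confirming that the correspondence is bijective, i.e. that the overlap is exactly $k$ and not larger.
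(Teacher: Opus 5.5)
Your proof is correct, but it takes a different route from the paper. The paper's argument is algebraic and two lines long. Assuming $a\le b$, it applies the Chu--Vandermonde identity to the splitting $z=b+(z-b)$ to get $(z)^{\underline{a}}=\sum_{k=0}^{a}\binom{a}{k}(b)^{\underline{k}}(z-b)^{\underline{a-k}}$. It then multiplies by $(z)^{\underline{b}}$ and uses $(z)^{\underline{b}}(z-b)^{\underline{a-k}}=(z)^{\underline{a+b-k}}$ and $(b)^{\underline{k}}=\binom{b}{k}k!$.

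You instead specialize to $z=N$, double-count pairs of injections according to the size of their common image, and return to polynomials by the infinitely-many-roots argument. Your bijection is sound. In the converse direction you should say explicitly that injectivity of the rebuilt $\psi$, and the equalities $\phi^{-1}(C)=A'$, $\psi^{-1}(C)=B'$, follow from the disjointness of $\phi([a])\supseteq\phi(A')=\psi(B')$ and $\psi([b]\setminus B')$ that you already observed.

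As for what each buys: the paper's route is shorter and stays inside $\mathbb{C}[z]$, but it uses Vandermonde as a black box and needs the reduction to $a\le b$. Yours is self-contained and symmetric in $a,b$, and it explains $\binom{a}{k}\binom{b}{k}k!$ as the number of partial matchings of size $k$. It is essentially the combinatorial proof of Vandermonde itself, applied directly. Either way, the feature used in Lemma \ref{l4.3} is immediate from the formula: every index satisfies $a+b-k\ge\max\{a,b\}$.
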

	\begin{proof}[Proof. ]
		We can assume that $a\le b$ without loss of generality. By Chu-Vandermonde identity, we get
		\begin{align*}
			(z)^{\underline{a}}=\sum\limits_{k=0}^{a}\binom{a}{k}(b)^{\underline{k}}(z-b)^{\underline{a-k}}.
		\end{align*}
		After multiplying by $(z)^{\underline{b}}$, we obtain the desired equality.
	\end{proof}
	
	\begin{lemma}\label{l4.3}
		Let $\bm{\gamma}=(\gamma_{ij})\in \mathbb{Z}^{r_{1}r_{2}}$ be a matrix-valued multi-index satisfying $|\bm{\gamma}|=1$. Set $\alpha_{i}=\gamma_{i1}+\cdots +\gamma_{ir_{2}}$ and $\beta_{j}=\gamma_{1j}+\cdots +\gamma_{r_{1}j}$ for all $i,j$. Then we have
		\begin{align*}
			\sigma(\prod_{\gamma_{ij}<0}\binom{w_{ij}}{-\gamma_{ij}}\cdot S)\subset \prod_{\alpha_{i}<0}\binom{u_{i}}{-\alpha_{i}}\prod_{\beta_{j}<0}\binom{v_{j}}{-\beta_{j}}\cdot L^{-1}R.
		\end{align*}
	\end{lemma}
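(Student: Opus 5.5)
The plan is to reduce everything to the falling-factorial basis $\{F_{K}(\bm{w})\}$ of $S$, on which $\sigma$ is given by an explicit formula, and then check divisibility term by term. For each $(i,j)$ set $c_{ij}=\max\{-\gamma_{ij},0\}\in\mathbb{Z}_{\ge 0}$ and $C=(c_{ij})$. Since $\binom{w_{ij}}{-\gamma_{ij}}=(w_{ij})^{\underline{-\gamma_{ij}}}/(-\gamma_{ij})!$, the ideal $\prod_{\gamma_{ij}<0}\binom{w_{ij}}{-\gamma_{ij}}\cdot S$ equals $F_{C}(\bm{w})\cdot S$.

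\emph{Step 1.} Show that $F_{C}(\bm{w})\cdot S$ is contained in the $\mathbb{C}$-span of those $F_{K}(\bm{w})$ with $K\ge C$ entrywise. Because $\{F_{K'}(\bm{w})\}$ is a $\mathbb{C}$-basis of $S$, it suffices to expand each product $F_{C}(\bm{w})F_{K'}(\bm{w})$. This product factors over the pairs $(i,j)$ into terms $(w_{ij})^{\underline{c_{ij}}}(w_{ij})^{\underline{k'_{ij}}}$. By Lemma \ref{l4.2}, each such term is a linear combination of $(w_{ij})^{\underline{c_{ij}+k'_{ij}-k}}$ with $0\le k\le\min\{c_{ij},k'_{ij}\}$, and each of these exponents is at least $c_{ij}$. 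Multiplying over all $(i,j)$ gives the claim.

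\emph{Step 2.} Since $\sigma$ is $\mathbb{C}$-linear (Lemma \ref{l4.1}), it remains to show that $\sigma(F_{K}(\bm{w}))$ lies in the target for every $K\ge C$. We have
\begin{align*}
\sigma(F_{K}(\bm{w}))=\frac{\prod_{i}(u_{i})^{\underline{k_{i-}}}\prod_{j}(v_{j})^{\underline{k_{-j}}}}{(s)^{\underline{|K|}}},
\end{align*}
and the denominator $(s)^{\underline{|K|}}=\prod_{N=0}^{|K|-1}(s-N)$ lies in $L$. If $\alpha_{i}<0$, then
\begin{align*}
k_{i-}\ge\sum_{j}c_{ij}\ge\sum_{j}(-\gamma_{ij})=-\alpha_{i}.
\end{align*}
For $k\ge a$ we have $(u)^{\underline{k}}=(u)^{\underline{a}}(u-a)^{\underline{k-a}}$, so $\binom{u_{i}}{-\alpha_{i}}$ divides $(u_{i})^{\underline{k_{i-}}}$ in $\mathbb{C}[\bm{u}]$. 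The same argument shows that $\binom{v_{j}}{-\beta_{j}}$ divides $(v_{j})^{\underline{k_{-j}}}$ whenever $\beta_{j}<0$. Since the factors for different $i$ and different $j$ involve distinct variables, the whole product $\prod_{\alpha_{i}<0}\binom{u_{i}}{-\alpha_{i}}\prod_{\beta_{j}<0}\binom{v_{j}}{-\beta_{j}}$ divides the numerator in $\mathbb{C}[\bm{u},\bm{v}]$, hence in $R$. This gives $\sigma(F_{K}(\bm{w}))\in\prod_{\alpha_{i}<0}\binom{u_{i}}{-\alpha_{i}}\prod_{\beta_{j}<0}\binom{v_{j}}{-\beta_{j}}\cdot L^{-1}R$, which proves the lemma. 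The hypothesis $|\bm{\gamma}|=1$ is not used here; it matters only in later applications.

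The only real obstacle is Step 1. The difficulty is that $\sigma$ is an $R$-module map but not $S$-linear, so $\sigma$ cannot simply be pulled through the $S$-ideal. One must instead use the fact that multiplication by a falling factorial never lowers any falling-factorial exponent, and Lemma \ref{l4.2} provides exactly this. Once Step 1 is in place, Step 2 is a routine divisibility check.
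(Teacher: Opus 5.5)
Your proposal is correct and follows essentially the same route as the paper: rewrite the ideal as $F_{C}(\bm{w})\cdot S$, use Lemma \ref{l4.2} to see that each product $F_{C}(\bm{w})F_{K'}(\bm{w})$ is a combination of basis elements $F_{K}(\bm{w})$ with $K\ge C$ entrywise, and then deduce from $k_{i-}\ge -\alpha_{i}$ and $k_{-j}\ge -\beta_{j}$ that the numerator of $\sigma(F_{K}(\bm{w}))$ is divisible by $\binom{u_{i}}{-\alpha_{i}}$ and $\binom{v_{j}}{-\beta_{j}}$, while its denominator $(s)^{\underline{|K|}}$ lies in $L$. Your Step 1 also states the Lemma \ref{l4.2} expansion more accurately than the paper's displayed formula, which mistakenly places $\sigma$ on the left-hand side of an identity that holds in $S$.
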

	\begin{proof}[Proof. ]
		Let $\eta_{ij}=\max\{ -\gamma_{ij},0\}$. We write $\prod\limits_{\gamma_{ij}<0}\binom{w_{ij}}{-\gamma_{ij}}\cdot S$ as $\prod\limits_{i,j}(w_{ij})^{\underline{\eta_{ij}}}\cdot S$. Then it suffices to show that
		\begin{align*}
			\sigma(F_{K}(\bm{w})\prod\limits_{\gamma_{ij}<0}\binom{w_{ij}}{-\gamma_{ij}})\in \prod_{\alpha_{i}<0}\binom{u_{i}}{-\alpha_{i}}\prod_{\beta_{j}<0}\binom{v_{j}}{-\beta_{j}}\cdot L^{-1}R
		\end{align*} 
		for all $K\in \mathbb{Z}^{r_{1}r_{2}}_{\ge 0}$. 
		\par By Lemma \ref{l4.2}, we have
		\begin{align*}
			\sigma(F_{K}(\bm{w})\prod\limits_{\gamma_{ij}<0}\binom{w_{ij}}{-\gamma_{ij}})=\prod\limits_{i,j}\sum\limits_{l_{ij}=0}^{\min\{ k_{ij},\eta_{ij}\}}\binom{k_{ij}}{l_{ij}}\binom{\eta_{ij}}{l_{ij}}l_{ij}!(w_{ij})^{\underline{k_{ij}+\eta_{ij}-l_{ij}}}.
		\end{align*}
		Note that $k_{ij}+\eta_{ij}-l_{ij}\ge \eta_{ij}$. Suppose $F_{K}(\bm{w})\prod\limits_{\gamma_{ij}<0}\binom{w_{ij}}{-\gamma_{ij}}=\sum\limits_{\mu=0}^{c}a_{\mu}F_{K^{\mu}}$ for $a_{\mu}\in \mathbb{C}$ and $K^{\mu}=(k^{\mu}_{ij})\in \mathbb{Z}_{\ge 0}^{r_{1}r_{2}}$. One can see that $(k^{\mu}_{ij})\ge \eta_{ij}$, so we have
		\begin{center}
			$(k^{\mu}_{i-})\ge \max \{-\alpha_{i},0\}$,\\
			$(k^{\mu}_{-j})\ge \max \{-\beta_{j},0\}$.
		\end{center}
		It follows that $\sigma(F_{K^{\mu}}(\bm{w}))\in \prod_{\alpha_{i}<0}\binom{u_{i}}{-\alpha_{i}}\prod_{\beta_{j}<0}\binom{v_{j}}{-\beta_{j}}\cdot L^{-1}R$.
	\end{proof}
	
	\begin{lemma}\label{l4.4}
		We have $\lambda(M_{X\times Y,IJ})\subset L^{-1}M_{I,J}$.
	\end{lemma}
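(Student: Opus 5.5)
The plan is to take $F\in M_{X\times Y,IJ}$ together with a functional equation witnessing this, and to push that equation through $\lambda$ term by term. Lemma \ref{l4.1} ($\sigma$ is $R$-linear) will move $\lambda$ past everything except the $\bm{w}$-dependence, and Lemma \ref{l4.3} will handle the binomial factors.

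\textbf{Step 1: reduce to a statement about the $\bm{w}$-dependent coefficients.} Write
\begin{align*}
F\prod_{i,j}(f_{i}g_{j})^{w_{ij}}=\sum_{|\bm{\gamma}|=1}P_{\bm{\gamma}}(\bm{w})\cdot \bigg(\prod_{\gamma_{ij}<0}\binom{w_{ij}}{-\gamma_{ij}}\bigg)\prod_{i,j}(f_{i}g_{j})^{w_{ij}+\gamma_{ij}},
\end{align*}
with $P_{\bm{\gamma}}\in\mathcal{D}_{X\times Y}[\bm{w}]$. The $w_{ij}$ are central, so I expand $P_{\bm{\gamma}}=\sum_{K}F_{K}(\bm{w})Q_{\bm{\gamma},K}$ with $Q_{\bm{\gamma},K}\in\mathcal{D}_{X\times Y}$. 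Put $\alpha_{i}=\sum_{j}\gamma_{ij}$ and $\beta_{j}=\sum_{i}\gamma_{ij}$, as in Lemma \ref{l4.3}, so that $|\bm{\alpha}|=|\bm{\beta}|=1$. The key observation is
\begin{align*}
\prod_{i,j}(f_{i}g_{j})^{w_{ij}+\gamma_{ij}}=f_{1}^{u_{1}+\alpha_{1}}\cdots f_{r_{1}}^{u_{r_{1}}+\alpha_{r_{1}}}g_{1}^{v_{1}+\beta_{1}}\cdots g_{r_{2}}^{v_{r_{2}}+\beta_{r_{2}}}
\end{align*}
under $R\subset S$. Hence $Q_{\bm{\gamma},K}$ applied to this expression, by the Leibniz rule, equals $H_{\bm{\gamma},K}\, f^{\bm{u}}g^{\bm{v}}$ with $H_{\bm{\gamma},K}\in\mathcal{O}_{X\times Y}[\frac{1}{f_{1}\cdots g_{r_{2}}}]\otimes R$. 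In other words, no $w_{ij}$ other than through $u_{i},v_{j}$ ever appears.

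Dividing by $f^{\bm{u}}g^{\bm{v}}$ gives an identity in $A_{X\times Y,IJ}$:
\begin{align*}
F=\sum_{\bm{\gamma},K}F_{K}(\bm{w})\Big(\prod_{\gamma_{ij}<0}\binom{w_{ij}}{-\gamma_{ij}}\Big)H_{\bm{\gamma},K}.
\end{align*}
Since $\lambda=\mathrm{id}\otimes\sigma$ is $\mathcal{O}[\frac{1}{fg}]\otimes R$-linear by Lemma \ref{l4.1}, I get
\begin{align*}
\lambda(F)=\sum_{\bm{\gamma},K}\sigma\Big(F_{K}(\bm{w})\prod_{\gamma_{ij}<0}\binom{w_{ij}}{-\gamma_{ij}}\Big)H_{\bm{\gamma},K}.
\end{align*}

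\textbf{Step 2: apply Lemma \ref{l4.3}.} Each $\sigma(\cdots)$ factors as
\begin{align*}
\prod_{\alpha_{i}<0}\binom{u_{i}}{-\alpha_{i}}\prod_{\beta_{j}<0}\binom{v_{j}}{-\beta_{j}}\cdot\rho
\end{align*}
with $\rho\in L^{-1}R$. Multiplying back by $f^{\bm{u}}g^{\bm{v}}$, each summand becomes
\begin{align*}
\rho\cdot Q_{\bm{\gamma},K}\cdot\bigg(\prod_{\alpha_{i}<0}\binom{u_{i}}{-\alpha_{i}}\prod_{\beta_{j}<0}\binom{v_{j}}{-\beta_{j}}\bigg)f^{\bm{u}+\bm{\alpha}}g^{\bm{v}+\bm{\beta}}.
\end{align*}
This lies in $L^{-1}$ of the module appearing in Definition \ref{d3.1}. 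So $\lambda(F)$ lies in $L^{-1}$ of the $R$-module of elements satisfying the product-type functional equation.

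\textbf{Step 3: identify that module with $L^{-1}M_{I,J}$.} Use $\mathcal{D}_{X\times Y}=\mathcal{D}_{X}\otimes_{\mathbb{C}}\mathcal{D}_{Y}$ and write $Q_{\bm{\gamma},K}=\sum Q'\otimes Q''$. Then each term $Q'\otimes Q''$ applied to the product is the tensor of an element of $M_{X,I}$ with an element of $M_{Y,J}$. The ring $R=R_{1}\otimes_{\mathbb{C}[s]}R_{2}$ acts through the two factors, so the whole sum lies in $M_{X,I}\otimes_{\mathbb{C}[s]}M_{Y,J}$, computed inside $A_{X,I}\otimes A_{Y,J}$. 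Finally, $A_{X,I}\otimes_{\mathbb{C}[s]}A_{Y,J}$ is free over $\mathbb{C}[s]$, and localization is exact. Hence $L^{-1}M_{I,J}\to L^{-1}(A_{X,I}\otimes A_{Y,J})$ is injective, and the statement is meaningful as an inclusion of submodules.

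\textbf{Main obstacle.} The delicate point is Step 1: making rigorous that the twisted $\mathcal{D}_{X\times Y}$-action on $A_{X\times Y,IJ}\cdot\prod(f_{i}g_{j})^{w_{ij}}$ only involves the $u_{i},v_{j}$. This is what makes the action commute with $\lambda$ despite $\sigma$ being merely $R$-linear and not $S$-linear. I will verify it first on a single $\partial_{x_{k}}$ or $\partial_{y_{l}}$ via
\begin{align*}
\partial_{x_{k}}\big(H f^{\bm{u}}g^{\bm{v}}\big)=\Big(\partial_{x_{k}}H+H\sum_{i}u_{i}\frac{\partial_{x_{k}}f_{i}}{f_{i}}\Big)f^{\bm{u}}g^{\bm{v}},
\end{align*}
and then induct on the order of the operator.
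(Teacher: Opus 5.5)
Your proposal is correct and follows essentially the same route as the paper. You expand each $P_{\bm{\gamma}}$ as $\bm{w}$-polynomials times operators in $\mathcal{D}_{X\times Y}$, note that applying those operators to $\prod_{i,j}(f_{i}g_{j})^{w_{ij}+\gamma_{ij}}$ only produces coefficients in $\mathcal{O}_{X\times Y}[\frac{1}{f_{1}\cdots f_{r_{1}}g_{1}\cdots g_{r_{2}}}]\otimes R$, pull these out by the $R$-linearity of $\lambda$ from Lemma \ref{l4.1}, and conclude with Lemma \ref{l4.3}; your Leibniz-rule check and Step 3 just make explicit two points the paper leaves tacit (that the $w_{ij}$ enter only through $u_{i},v_{j}$, and that the module defined by the functional equation of Definition \ref{d3.1} is exactly $M_{X,I}\otimes_{\mathbb{C}[s]}M_{Y,J}$ inside $A_{X,I}\otimes_{\mathbb{C}[s]}A_{Y,J}$).
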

	\begin{proof}[Proof. ]
		Suppose $F\in M_{X\times Y,IJ}$ satisfies 
		\begin{align*}
			Ff_{1}^{u_{1}}\cdots f_{r_{1}}^{u_{r_{1}}}g_{1}^{v_{1}}\cdots g_{r_{2}}^{v_{r_{2}}}=\sum_{|\bm{\gamma}|=1}P_{\bm{\gamma}}\cdot \bigg( \prod_{\gamma_{ij}<0}\binom{w_{ij}}{-\gamma_{ij}}\bigg)f_{1}^{u_{1}+\alpha_{1}}\cdots f_{r_{1}}^{u_{r_{1}}+\alpha_{r_{1}}}g_{1}^{v_{1}+\beta_{1}}\cdots g_{r_{2}}^{v_{r_{2}}+\beta_{r_{2}}},
		\end{align*}
		where $P_{\bm{\gamma}}\in \mathcal{D}_{X\times Y}[\bm{w}]$, $\alpha_{i}=\gamma_{i1}+\cdots +\gamma_{ir_{2}}$, and $\beta_{j}=\gamma_{1j}+\cdots +\gamma_{r_{1}j}$. We write $P_{\bm{\gamma}}$ as $\sum\limits_{\nu\in I_{\bm{\gamma}}}h_{\bm{\gamma},\nu}(\bm{w})Q_{\bm{\gamma},\nu}$, where $h_{\bm{\gamma},\nu}(\bm{w})\in S$ and $Q_{\bm{\gamma},\nu}\in \mathcal{D}_{X\times Y}$. 
		\par Suppose $Q_{\bm{\gamma},\nu}f_{1}^{u_{1}+\alpha_{1}}\cdots f_{r_{1}}^{u_{r_{1}}+\alpha_{r_{1}}}g_{1}^{v_{1}+\beta_{1}}\cdots g_{r_{2}}^{v_{r_{2}}+\beta_{r_{2}}}$ is of the form
		\begin{align*}
			\sum\limits_{\eta\in I_{\bm{\gamma},\nu}}\theta_{\bm{\gamma},\nu,\eta}G_{\bm{\gamma},\nu,\eta}f_{1}^{u_{1}}\cdots f_{r_{1}}^{u_{r_{1}}}g_{1}^{v_{1}}\cdots g_{r_{2}}^{v_{r_{2}}}.
		\end{align*}
		By Lemma \ref{l4.1}, $\lambda$ is an $R$-module morphism, so we have
		\begin{align*}
			\lambda(F) & =\sum\limits_{|\bm{\gamma}|=1}\sum\limits_{\nu\in I_{\bm{\gamma}}}\sum\limits_{\eta\in I_{\bm{\gamma},\nu}}\lambda(h_{\bm{\gamma},\nu}\theta_{\bm{\gamma},\nu,\eta}G_{\bm{\gamma},\nu,\eta}\prod_{\gamma_{ij}<0}\binom{w_{ij}}{-\gamma_{ij}})\\
			& =\sum\limits_{|\bm{\gamma}|=1}\sum\limits_{\nu\in I_{\bm{\gamma}}}\sum\limits_{\eta\in I_{\bm{\gamma},\nu}}\theta_{\bm{\gamma},\nu,\eta}G_{\bm{\gamma},\nu,\eta}\lambda(h_{\bm{\gamma},\nu}\prod_{\gamma_{ij}<0}\binom{w_{ij}}{-\gamma_{ij}}).
		\end{align*}
		By Lemma \ref{l4.3}, it is in $L^{-1}M_{I,J}$.
	\end{proof}
	
	\begin{proof}[Proof of $b_{IJ}(s)=b_{I,J}(s)$. ]
		We prove $b_{I,J}(s)|b_{IJ}(s)$. By Lemma \ref{l4.1} and Lemma \ref{l4.4}, we have $b_{IJ}(s)=\sigma(b_{IJ}(s))\in L^{-1}M_{I,J}$. The polynomial $b_{IJ}(s)$ is also in $\mathbb{C}[s]$. Hence there exists $a_{1},\cdots ,a_{N}\in \mathbb{Z}_{\ge 0}$ such that $(s-a_{1})\cdots (s-a_{N})b_{IJ}(s)\in \mathbb{C}[s]\cap M_{I,J}=b_{I,J}(s)\cdot \mathbb{C}[s]$. It follows that $b_{I,J}(s)|(s-a_{1})\cdots (s-a_{N})b_{IJ}(s)$. All roots of $b_{I,J}(s)$ are negative, so $b_{I,J}(s)|b_{IJ}(s)$.
	\end{proof}
	
	\par The proof of Theorem \ref{t1.1} is complete.
	
	\begin{cor}\label{c4.5}
		The strong monodromy conjecture holds for $IJ$ if it holds for $I$ and $J$.
	\end{cor}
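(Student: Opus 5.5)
This will follow from Theorem \ref{t1.1} together with the multiplicative Thom-Sebastiani property of the Igusa zeta function. Let $I\subset \mathbb{Z}[\bm{x}]$ and $J\subset \mathbb{Z}[\bm{y}]$ be ideals for which Conjecture \ref{mc} holds. Let $p$ be a prime large enough for both hypotheses to apply. We have
\begin{align*}
	Z_{IJ,p}(s)=Z_{I,p}(s)Z_{J,p}(s).
\end{align*}
To justify this, we first note that $ord_{p}(IJ)(a,b)=ord_{p}I(a)+ord_{p}J(b)$. Indeed, $IJ$ is generated by the products $f_{i}g_{j}$, so the minimum over generators of $ord_{p}f_{i}(a)+ord_{p}g_{j}(b)$ splits as a sum of two minima. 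The integrand therefore factors, and Fubini for the product Haar measure gives the identity for $Re(s)>0$. Uniqueness of meromorphic continuation then extends it to all of $\mathbb{C}$. We also need the ideal $IJ\otimes\mathbb{C}$ to agree with the product of the complexified ideals. This is immediate from the generators, so Theorem \ref{t1.1} applies and gives $b_{IJ}(s)=b_{I}(s)b_{J}(s)$.

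Now let $s_{0}$ be a pole of $Z_{IJ,p}(s)$ of multiplicity $k$. Write $k_{1}\ge 0$ and $k_{2}\ge 0$ for the pole orders of $Z_{I,p}$ and $Z_{J,p}$ at $s_{0}$, taking the order to be $0$ if $s_{0}$ is not a pole. Since the pole order of a product is at most the sum of the pole orders, we get $k\le k_{1}+k_{2}$. If $s_{0}$ is also a zero of one factor, the inequality can be strict, and that only helps. By the hypothesis on $I$, when $k_{1}>0$ the number $s_{0}$ is a root of $b_{I}(s)$ of multiplicity at least $k_{1}$; the same holds for $J$ and $k_{2}$. When $k_{1}=0$ or $k_{2}=0$ the corresponding statement is vacuous. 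Since $b_{IJ}=b_{I}b_{J}$, multiplicities of roots add. Hence $s_{0}$ is a root of $b_{IJ}(s)$ of multiplicity at least $k_{1}+k_{2}\ge k$.

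The only step needing care is the pole-order bookkeeping. A pole of the product need not be a pole of each factor, since one factor could vanish or be merely regular there. The inequality $k\le k_{1}+k_{2}$ handles every such case uniformly once we use the convention that the order is $0$ at regular points. Zeros of a factor only decrease $k$, so they cause no problem. A smaller point is that "sufficiently large $p$" must be interpreted as exceeding both thresholds coming from $I$ and $J$.
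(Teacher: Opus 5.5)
Your proposal is correct and matches what the paper intends: the paper states the corollary without a written proof, as an immediate consequence of Theorem \ref{t1.1} together with the identity $Z_{IJ,p}(s)=Z_{I,p}(s)Z_{J,p}(s)$ recorded in the introduction. Your check that $ord_{p}(IJ)(a,b)=ord_{p}I(a)+ord_{p}J(b)$, the pole-order bound $k\le k_{1}+k_{2}$, and taking $p$ beyond both thresholds supply exactly the details the paper leaves implicit.
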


	\section{An alternative proof of $b_{I,J}(s)=b_{I}(s)b_{J}(s)$}\label{s5}
	
	\par Let $F_{I}=f_{1}X_{1}+\cdots +f_{r_{1}}X_{r_{1}}\in \mathbb{C}[\bm{x},\bm{X}]$ and $G_{J}=g_{1}Y_{1}+\cdots +g_{r_{2}}Y_{r_{2}}\in \mathbb{C}[\bm{y},\bm{Y}]$. By Theorem \ref{c1.1} and Theorem \ref{t2.2}, $b_{I}(s)b_{J}(s)=\frac{1}{(s+1)^{2}}\cdot b_{F_{I}}(s)b_{G_{J}}(s)=\frac{1}{(s+1)^{2}}b_{F_{I}G_{J}}(s)$. In this section, we prove $b_{I,J}(s)=\frac{1}{(s+1)^{2}}b_{F_{I}G_{J}}(s)$ via a similar argument to that in \cite{c6}. Then $b_{I,J}(s)=b_{I}(s)b_{J}(s)$ is an immediate consequence.
	\par We shall also need some further notation for multi-indices. Let $\bm{\alpha}=(\alpha_{1},\cdots ,\alpha_{r})\in \mathbb{Z}_{\ge 0}^{r}$ be a multi-index. We write $\bm{\alpha}=\alpha_{1}!\cdots \alpha_{r}!$ and $\binom{|\bm{\alpha}|}{\bm{\alpha}}=\frac{|\bm{\alpha}|!}{\bm{\alpha}!}$.
	
	\begin{proof}[Proof of $b_{I,J}(s)=b_{I}(s)b_{J}(s)$. ]
		We prove $b_{I,J}(s)=\frac{1}{(s+1)^{2}}b_{F_{I}G_{J}}(s)$. Suppose that
		\begin{equation}\label{e7}
			b_{F_{I}G_{J}}(s)(F_{I}G_{J})^{s}=P\cdot (F_{I}G_{J})^{s+1}.
		\end{equation}
		We write $P$ as
		\begin{align*}
			P=\sum_{\substack{\bm{\alpha},\bm{\alpha}^{\prime}\in \mathbb{Z}_{\ge 0}^{r_{1}}\\
			\bm{\beta},\bm{\beta}^{\prime}\in \mathbb{Z}_{\ge 0}^{r_{2}}}}\bigg( \frac{1}{\bm{\alpha}^{\prime}!}P_{\bm{\alpha},\bm{\alpha}^{\prime}}\bm{X}^{\bm{\alpha}}\partial_{\bm{X}}^{\bm{\alpha}^{\prime}}\bigg)\bigg( \frac{1}{\bm{\beta}^{\prime}!}Q_{\bm{\beta},\bm{\beta}^{\prime}}\bm{Y}^{\bm{\beta}}\partial_{\bm{Y}}^{\bm{\beta}^{\prime}}\bigg),
		\end{align*}
		where $P_{\bm{\alpha},\bm{\alpha}^{\prime}}\in \mathcal{D}_{X}[s]$ and $Q_{\bm{\beta},\bm{\beta}^{\prime}}\in \mathcal{D}_{Y}[s]$. For $k,l\in \mathbb{Z}_{\ge 0}$, recall the set $T_{k,l}=\{ \bm{a}\in \mathbb{Z}_{\ge 0}^{k}\;|\;|\bm{a}|=l\}$ defined in the proof of Lemma \ref{l3.2}. For all $l\in \mathbb{Z}_{\ge 0}$, taking $s=l$ in \eqref{e7} yields
		\begin{align*}
			(F_{I}G_{J})^{l}=\sum\limits_{\substack{\bm{a}\in T_{r_{1},l}\\
			\bm{b}\in T_{r_{2},l}}}\binom{l}{\bm{a}}\binom{l}{\bm{b}}f_{1}^{a_{1}}\cdots f_{r_{1}}^{a_{r_{1}}}g_{1}^{b_{1}}\cdots g_{r_{2}}^{b_{r_{2}}}\bm{X}^{\bm{a}}\bm{Y}^{\bm{b}},
		\end{align*} 
		\begin{align*}
			P(l)\cdot (F_{I}G_{J})^{l+1}=\sum\limits_{\substack{\bm{\alpha},\bm{\alpha}^{\prime}\in \mathbb{Z}_{\ge 0}^{r_{1}}\\
			\bm{\beta},\bm{\beta}^{\prime}\in \mathbb{Z}_{\ge 0}^{r_{2}}}}\sum\limits_{\substack{\bm{a}\in T_{r_{1},l+1}\\
			\bm{b}\in T_{r_{2},l+1}}}A_{\bm{\alpha},\bm{\alpha}^{\prime},\bm{a}}(l)B_{\bm{\beta},\bm{\beta}^{\prime},\bm{b}}(l)\bm{X}^{\bm{a}-\bm{\alpha}^{\prime}+\bm{\alpha}}\bm{Y}^{\bm{b}-\bm{\beta}^{\prime}+\bm{\beta}},
		\end{align*}
		where $A_{\bm{\alpha},\bm{\alpha}^{\prime},\bm{a}}(l)$ and $B_{\bm{\beta},\bm{\beta}^{\prime},\bm{b}}(l)$ are
		\begin{align*}
			A_{\bm{\alpha},\bm{\alpha}^{\prime},\bm{a}}(l)=\binom{l+1}{\bm{a}}P_{\bm{\alpha},\bm{\alpha}^{\prime}}(l)\cdot \prod\limits_{i=1}^{r_{1}}f_{i}^{a_{i}}\binom{a_{i}}{\alpha_{i}^{\prime}},\\
			B_{\bm{\beta},\bm{\beta}^{\prime},\bm{b}}(l)=\binom{l+1}{\bm{b}}Q_{\bm{\beta},\bm{\beta}^{\prime}}(l)\cdot \prod\limits_{j=1}^{r_{2}}g_{j}^{b_{j}}\binom{b_{j}}{\beta_{j}^{\prime}}.
		\end{align*}
		Fixing $\bm{a}\in T_{r_{1},l}$ and $\bm{b}\in T_{r_{2},l}$, the coefficient of $\bm{X}^{\bm{a}}\bm{Y}^{\bm{b}}$ in $P(l)\cdot (F_{I}G_{J})^{l+1}$ is
		\begin{align*}
			\sum\limits_{\substack{\bm{\alpha},\bm{\alpha}^{\prime}\in \mathbb{Z}_{\ge 0}^{r_{1}}\\
			\bm{a}+\bm{\alpha}^{\prime}-\bm{\alpha}\in T_{r_{1},l}}}\sum\limits_{\substack{\bm{\beta},\bm{\beta}^{\prime}\in \mathbb{Z}_{\ge 0}^{r_{2}}\\
			\bm{b}+\bm{\beta}^{\prime}-\bm{\beta}\in T_{r_{2},l}}}A_{\bm{\alpha},\bm{\alpha}^{\prime},\bm{a}+\bm{\alpha}^{\prime}-\bm{\alpha}}(l)B_{\bm{\beta},\bm{\beta}^{\prime},\bm{b}+\bm{\beta}^{\prime}-\bm{\beta}}(l).
		\end{align*}
		From now on, we omit the conditions $\bm{\alpha},\bm{\alpha}^{\prime}\in \mathbb{Z}_{\ge 0}^{r_{1}}$ and $\bm{\beta},\bm{\beta}^{\prime}\in \mathbb{Z}_{\ge 0}^{r_{2}}$ from the notation. By comparing the coefficients of $\bm{X}^{\bm{a}}\bm{Y}^{\bm{b}}$, the equality \eqref{e7} shows 
		\begin{equation}\label{e8}
			b_{F_{I}G_{J}}(l)\binom{l}{\bm{a}}\binom{l}{\bm{b}}f_{1}^{a_{1}}\cdots f_{r_{1}}^{a_{r_{1}}}g_{1}^{b_{1}}\cdots g_{r_{2}}^{b_{r_{2}}}=\sum\limits_{\substack{\bm{a}+\bm{\alpha}^{\prime}-\bm{\alpha}\in T_{r_{1},l+1}\\
			\bm{b}+\bm{\beta}^{\prime}-\bm{\beta}\in T_{r_{2},l+1}}}A_{\bm{\alpha},\bm{\alpha}^{\prime},\bm{a}+\bm{\alpha}^{\prime}-\bm{\alpha}}(l)B_{\bm{\beta},\bm{\beta}^{\prime},\bm{b}+\bm{\beta}^{\prime}-\bm{\beta}}(l).
		\end{equation} 
		One can see that $b_{F_{I}G_{J}}(s)$ is the monic polynomial of the smallest degree such that there exist $P_{\bm{\alpha},\bm{\alpha}^{\prime}}$ and $Q_{\bm{\beta},\bm{\beta}^{\prime}}$ admitting an equality with the same structure as \eqref{e8} after taking $s=l$ for all $l\in \mathbb{Z}_{\ge 0}$. Applying the identities
		\begin{align*}
			\binom{l+1}{\bm{a}+\bm{\alpha}^{\prime}-\bm{\alpha}}\prod\limits_{i=1}^{r_{1}}\binom{a_{i}+\alpha_{i}^{\prime}-\alpha_{i}}{\alpha_{i}^{\prime}}=(l+1)\binom{l}{\bm{a}}\prod\limits_{i=1}^{r_{1}}\frac{a_{i}!}{\alpha_{i}^{\prime}!(a_{i}-\alpha_{i})!},
		\end{align*} 
		\begin{align*}
			\binom{l+1}{\bm{b}+\bm{\beta}^{\prime}-\bm{\beta}}\prod\limits_{j=1}^{r_{2}}\binom{b_{j}+\beta_{j}^{\prime}-\beta_{j}}{\beta_{j}^{\prime}}=(l+1)\binom{l}{\bm{b}}\prod\limits_{j=1}^{r_{2}}\frac{b_{j}!}{\beta_{j}^{\prime}!(b_{j}-\beta_{j})!},
		\end{align*} 
		the equality \eqref{e8} shows that $\frac{b_{F_{I}G_{J}}(l)}{(l+1)^{2}}f_{1}^{a_{1}}\cdots f_{r_{1}}^{a_{r_{1}}}g_{1}^{b_{1}}\cdots g_{r_{2}}^{b_{r_{2}}}$ is equal to
		\begin{equation}\label{e9}
			\sum\limits_{\substack{\bm{a}+\bm{\alpha}^{\prime}-\bm{\alpha}\in T_{r_{1},l}\\
			\bm{b}+\bm{\beta}^{\prime}-\bm{\beta}\in T_{r_{2},l}}}P_{\bm{\alpha},\bm{\alpha}^{\prime}}(l)Q_{\bm{\beta},\bm{\beta}^{\prime}}(l)\cdot \bigg( \prod\limits_{i=1}^{r_{1}}\frac{\alpha_{i}!}{\alpha_{i}^{\prime}!}\binom{a_{i}}{\alpha_{i}}\prod\limits_{j=1}^{r_{2}}\frac{\beta_{j}!}{\beta_{j}^{\prime}!}\binom{b_{j}}{\beta_{j}}\bigg)\prod\limits_{i=1}^{r_{1}}f_{i}^{a_{i}+\alpha_{i}^{\prime}-\alpha_{i}}\prod\limits_{j=1}^{r_{2}}g_{j}^{b_{j}+\beta^{\prime}_{j}-\beta_{j}}.
		\end{equation}
		\par The equality that $\frac{b_{F_{I}G_{J}}(l)}{(l+1)^{2}}f_{1}^{a_{1}}\cdots f_{r_{1}}^{a_{r_{1}}}g_{1}^{b_{1}}\cdots g_{r_{2}}^{b_{r_{2}}}$ equals \eqref{e9} holds for all $\bm{a}\in \mathbb{Z}_{\ge 0}^{r_{1}}$ and $\bm{b}\in \mathbb{Z}_{\ge 0}^{r_{2}}$. In the ring $R=\mathbb{C}[\bm{u},\bm{v}]/(|\bm{u}|-|\bm{v}|)$, we can replace $\bm{a}$, $\bm{b}$ by $\bm{u}$, $\bm{v}$ to obtain
		\begin{align*}
			& \frac{b_{F_{I}G_{J}}(s)}{(s+1)^{2}}\prod\limits_{i=1}^{r_{1}}f_{i}^{u_{i}}\prod\limits_{j=1}^{r_{2}}g_{j}^{v_{j}}\\
			= & \sum\limits_{\substack{|\bm{\alpha}^{\prime}-\bm{\alpha}|=1\\
			|\bm{\beta}^{\prime}-\bm{\beta}|=1}}P_{\bm{\alpha},\bm{\alpha}^{\prime}}(s)Q_{\bm{\beta},\bm{\beta}^{\prime}}(s)\cdot \bigg( \prod\limits_{i=1}^{r_{1}}\frac{\alpha_{i}!}{\alpha_{i}^{\prime}!}\binom{u_{i}}{\alpha_{i}}\prod\limits_{j=1}^{r_{2}}\frac{\beta_{j}!}{\beta_{j}^{\prime}!}\binom{v_{j}}{\beta_{j}}\bigg)\prod\limits_{i=1}^{r_{1}}f_{i}^{u_{i}+\alpha_{i}^{\prime}-\alpha_{i}}\prod\limits_{j=1}^{r_{2}}g_{j}^{v_{j}+\beta^{\prime}_{j}-\beta_{j}}\\
		    \in & \sum\limits_{\substack{|\bm{\mu}|=|\bm{\nu}|=1}}\sum\limits_{\substack{\bm{\alpha}\in \mathbb{Z}_{\ge 0}^{r_{1}},\bm{\alpha}+\bm{\mu}\in \mathbb{Z}_{\ge 0}^{r_{1}}\\
		    \bm{\beta}\in \mathbb{Z}_{\ge 0}^{r_{2}},\bm{\beta}+\bm{\nu}\in \mathbb{Z}_{\ge 0}^{r_{2}}}}\mathcal{D}_{X\times Y}[s]\cdot \bigg( \prod\limits_{i=1}^{r_{1}}\binom{u_{i}}{\alpha_{i}}\prod\limits_{j=1}^{r_{2}}\binom{v_{j}}{\beta_{j}}\bigg)\prod\limits_{i=1}^{r_{1}}f_{i}^{u_{i}+\mu_{i}}\prod\limits_{j=1}^{r_{2}}g_{j}^{v_{j}+\nu_{j}}.
		\end{align*}
		When $\mu_{i}\ge 0$, the ideal $(\binom{u_{i}}{\alpha_{i}}|\alpha_{i}+\mu_{i}\ge 0)\subset \mathbb{C}[u_{i}]$ is equal to $\mathbb{C}[u_{i}]$. When $\mu_{i}<0$, the ideal $(\binom{u_{i}}{\alpha_{i}}|\alpha_{i}+\mu_{i}\ge 0)\subset \mathbb{C}[u_{i}]$ is equal to $(\binom{u_{i}}{-\mu_{i}})$. A similar argument works for $\beta_{j}$ and $\nu_{j}$. Therefore, $\frac{b_{F_{I}G_{J}}(s)}{(s+1)^{2}}$ is the monic polynomial of the smallest degree such that
		\begin{align*}
			\frac{b_{F_{I}G_{J}}(s)}{(s+1)^{2}}\prod\limits_{i=1}^{r_{1}}f_{i}^{u_{i}}\prod\limits_{j=1}^{r_{2}}g_{j}^{v_{j}}\in \sum\limits_{\substack{|\bm{\mu}|=|\bm{\nu}|=1}}\mathcal{D}_{X\times Y}[s]\cdot \bigg( \prod\limits_{\mu_{i}<0}\binom{u_{i}}{-\mu_{i}}\prod\limits_{\nu_{j}<0}\binom{v_{j}}{-\nu_{j}}\bigg)\prod\limits_{i=1}^{r_{1}}f_{i}^{u_{i}+\mu_{i}}\prod\limits_{j=1}^{r_{2}}g_{j}^{v_{j}+\nu_{j}}.
		\end{align*}
		It follows that $b_{I,J}(s)=\frac{b_{F_{I}G_{J}}(s)}{(s+1)^{2}}$.
	\end{proof}

	\end{document}